\newcommand{\ZZ}{{\mathbb{Z}}}
\newcommand\A{{\mathcal A}}
\renewcommand{\S}{\mathfrak{S}}
\newcommand\zzn{{\ZZ_3^n \times \ZZ}}
\DeclareMathOperator\Sym{\rm Sym}
\DeclareMathOperator\Ker{\rm Ker}
\DeclareMathOperator\Stab{\rm Stab}
\DeclareMathOperator\tC{\widetilde{C}}
\DeclareMathOperator\tA{\widetilde{A}}
\DeclareMathOperator\tB{\widetilde{B}}
\DeclareMathOperator\tD{\widetilde{D}}
\newtheorem{theorem}{Theorem}[section]
\newtheorem{proposition}[theorem]{Proposition}
\newtheorem{lemma}[theorem]{Lemma}
\newtheorem{corollary}[theorem]{Corollary}
\newtheorem{example}[theorem]{Example}
\newtheorem{question}[theorem]{Question}
\newtheorem{defn}[theorem]{Definition}
\newtheorem{remark}[theorem]{Remark}
\newtheorem{problem}[theorem]{Problem}
\newtheorem{observation}[theorem]{Observation}
\newcommand{\todo}[1]{\vspace{5 mm}\par \noindent
	\marginpar{\textsc{ToDo}} \framebox{\begin{minipage}[c]{0.95
				\textwidth}
			#1 \end{minipage}}\vspace{5 mm}\par}
\title{Combinatorial flip actions \\ and Gelfand pairs for affine Weyl groups}
\author{Ron M. Adin}
\address{Department of Mathematics, Bar-Ilan University, 
	Ramat-Gan 52900, Israel}
\email{radin@math.biu.ac.il}
\author{P\'al Heged\"us}
\address{R\'enyi Alfr\'ed Institute of Mathematics, Re\'altanoda utca 13-15, H-1053, Budapest, Hungary}
\email{hegedus.pal@renyi.hu}
\author{Yuval Roichman}
\address{Department of Mathematics, Bar-Ilan University, 
	Ramat-Gan 52900, Israel}
\email{yuvalr@math.biu.ac.il}
\dedicatory{
	To the memory of Jan Saxl, our dear friend, mentor and source of inspiration}
\date{November 29, 2021}
\thanks{PH was partially supported by Hungarian National Research, Development and Innovation Office (NKFIH), Grant No.\ K115799, and by a visiting grant from Bar-Ilan University. The project leading to this application has received funding from the European Research Council (ERC) under the European Union's Horizon 2020 research and innovation programme, Grant agreement No.\ 741420.
	RMA and YR were partially supported by the Israel Science Foundation, Grant No.\ 1970/18.}
\begin{document}
	
	\maketitle
	
	\begin{abstract}
		Several combinatorial actions of the affine Weyl group of type $\tC_{n}$ on triangulations, trees, words and permutations are compared.  
		Addressing a question of David Vogan, 
		we show that, modulo a natural involution, 
		these permutation representations are multiplicity-free. 
		The proof uses a general construction of Gelfand subgroups  
		in the affine Weyl groups of types $\tC_{n}$ and $\tB_n$. 
	\end{abstract}
	
	\tableofcontents
	
	
	\section{Introduction}
	
	
	The affine Weyl group of type $\tC_{n}$ is generated by
	\[
	S=\{s_0,s_1,\ldots, s_{n}\}
	\]
	subject to the Coxeter relations
	\begin{align*}
	s_i^2=1 \qquad & \forall i,\\
	(s_i s_j)^2=1 \qquad &  \text{for } |j-i| > 1,\\
	(s_i s_{i+1})^3=1 \qquad &  \text{for } 1\le i \le n-2,\\
	(s_i s_{i+1})^4=1 \qquad &  \text{for } i=0 \text{ and } n-1.
	\end{align*}
	
	In this paper we present several actions of $\tC_{n}$ on combinatorial objects ---
	triangulations, trees, words, and permutations ---  
	and address a question of David Vogan inquiring whether the resulting $\tC_{n}$-modules are multiplicity-free.
	We build a uniform framework which contains all the above-mentioned actions as special cases.
	It turns out that the answer to the original question is negative. 
	However, there is a $\tC_{n}$-equivariant pairing of the objects (which has a nice combinatorial interpretation in each case) 
	such that the induced action on the set of pairs 
	is indeed multiplicity-free.
	
	\bigskip
	
	Here is a typical example.
	For a set $X$, let $\Sym(X)$ 
	be the group of 
	all bijections of $X$ onto itself.
	In particular, let $\S_n:=\Sym([n])$ 
	be the symmetric group of permutations on the set $[n]:=\{1,\dots,n\}$.
	A permutation in $\S_n$ is an {\em arc permutation} if every prefix forms an interval in the cyclic group $\ZZ_n$; see Subsection~\ref{sec:arc} for more details.
	Denote by $\A_n$ the set of arc permutations in $\S_n$. 
	
	\medskip
	
	The affine Weyl group $\tC_{n}$ acts on the set of arc permutations
	$\A_{n+2}$ as follows.
	
	\begin{defn}
		For every $0\le i\le n$, let $\sigma_{i+1} \in \S_{n+2}$ be the transposition $(i+1,i+2)$.
		Define a group homomorphism $\rho: \tC_{n} \to \Sym(\A_{n+2})$ 
		by
		\[
		\rho(s_i)(\pi) :=
		\begin{cases}
		\pi \sigma_{i+1},&\text{if }\pi \sigma_{i+1}\in \A_{n+2};\\
		\pi, &\text{otherwise}
		\end{cases}
		\qquad (\forall \pi \in \A_{n+2},\, 0 \le i \le n).
		\]
	\end{defn}


	It was shown in~\cite{TFT2} that this 
	determines a well-defined transitive 
	$\tC_{n}$-action on $\A_{n+2}$.
	The resulting Schreier graph and its diameter were studied in~\cite{TFT2, ER, Roy}.
	
	\bigskip
	
	
	
	
	

	The following question was posed by David Vogan (personal communication, 2010).
	
	\begin{question}~\cite[Question 1]{ER}\label{qu:original} Is the $\tC_{n}$-module determined by this action on $\A_{n+2}$ multiplicity-free?
	\end{question}
	
	Before we continue we must say a few words about multiplicity-freeness. A finite dimensional module $V$ of a finite group $G$ is \emph{multiplicity-free} if every simple module of $G$ occurs in $V$ with multiplicity at most $1$. Suppose that $G$ is not finite, but $\Ker(V)=\{g\in G\mid g(v)=v,\,\forall v\in V \}$, the kernel of $V$ is of finite index in $G$. As $V$ is naturally a $G/\Ker(V)$-module, we may still call $V$ multiplicity-free if it is multiplicity-free as a $G/\Ker(V)$-module in the original sense. In our situation $\tC_n$ is not finite, indeed. Nevertheless, it acts on the finite set $\A_{n+2}$, so each stabilizer $\Stab_{\tC_n}(\pi)$ is of finite index. As $\Ker(V)=\cap_{\pi}\Stab_{\tC_n}(\pi)$ is the intersection of finitely many subgroups of finite index, hence $|\tC_n:\Ker(V)|<\infty$. So Question~\ref{qu:original} and all other similar claims made in the paper make sense.
	
	In this paper we construct a family of multiplicity-free $\tC_{n}$-modules, and deduce an ``almost affirmative'' answer to the above question:
	while the
	$\tC_{n}$-module determined by the above action on $\A_{n+2}$ is not multiplicity-free, the following holds.
	
	\medskip
	
	
	Let $\iota:\S_{n+2}\mapsto \S_{n+2}$  be the involution defined by
	\begin{equation}\label{eq:def_L}
	\pi^{\iota}:= w_0 \pi \qquad (\forall \pi\in \S_{n+2}),  
	\end{equation}
	where $w_0=[n+1,n,\ldots,1,n+2]$ is the longest element in  maximal parabolic subgroup $\S_{n+1}:=\{\pi\in \S_{n+2}\mid \pi(n+2)=n+2\}$.
	In other words, for $\pi=[\pi(1),\dots,\pi(n+2)]\in \S_{n+2}$ 
	\[
	\pi^\iota(i) :=
	\begin{cases}
	n+2-\pi(i) &\text{if }1\leq \pi(i) \leq n+1; \\
	n+2 &\text{if }\pi(i) = n + 2.
	\end{cases}
	\]
	
	The involution $\iota$ preserves the set of arc permutations $\A_{n+2}\subset \S_{n+2}$,
	and commutes with the above $\tC_n$-action.
	%
	%
	%
	Hence, 
	this yields
	a well-defined $\tC_{n}$-action on the set of equivalence classes $\A_{n+2}/\iota$.
	
	
	\begin{theorem}\label{thm:main1}
		The $\tC_{n}$-module determined by the above action on $\iota$-equivalence classes of arc permutations 
		is multiplicity-free
	\end{theorem}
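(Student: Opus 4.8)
The plan is to recast the statement as a Gelfand-pair property and then to verify that property via a double-coset symmetry criterion. First I would record the structural reduction. Since the $\tC_n$-action on $\A_{n+2}$ is transitive and the involution $\iota$ commutes with it, the induced action on the quotient set $\A_{n+2}/\iota$ is again transitive. Fix a base arc permutation $\pi_0$ and let $K:=\Stab_{\tC_n}([\pi_0])$ be the stabilizer of its $\iota$-class; this $K$ contains the point stabilizer $H:=\Stab_{\tC_n}(\pi_0)$ as a subgroup of index $[K:H]\le 2$. The permutation module attached to the action on $\A_{n+2}/\iota$ is then isomorphic to $\CC[\tC_n/K]\cong \operatorname{Ind}_K^{\tC_n}\mathbf 1$, and it factors through a finite quotient of $\tC_n$ as explained in the introduction. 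Consequently Theorem~\ref{thm:main1} is equivalent to the assertion that $(\tC_n,K)$ is a Gelfand pair.

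To prove the Gelfand property I would use the standard criterion that the Hecke (centralizer) algebra $\operatorname{End}_{\tC_n}(\CC[\tC_n/K])$ is commutative, and I would establish commutativity by exhibiting an anti-automorphism $\tau$ of $\tC_n$ that fixes $K$ setwise and stabilizes every double coset, i.e.\ $\tau(KgK)=KgK$ for all $g$. The natural first candidate is inversion $g\mapsto g^{-1}$, possibly twisted by the order-two diagram automorphism of $\tC_n$ or by conjugation by the element $w_0$ that defines $\iota$. Since $\iota$ is left multiplication by $w_0$ while the $\tC_n$-action is by (partial) right multiplication, this $w_0$-symmetry is exactly the extra ingredient that upgrades $H$ to $K$, and it should be what forces the double cosets to be symmetric. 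Once $\tau(K)=K$ and $g^{-1}\in KgK$ are verified for all $g$, Gelfand's lemma yields commutativity of the Hecke algebra and hence the theorem.

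The remaining, and main, task is to identify $K$ explicitly and to verify the symmetry condition. Here I would pass to a concrete model of $\tC_n$ — as affine signed permutations, or as a semidirect product of the hyperoctahedral group with the coroot lattice — compute $K$ from the combinatorial condition $\rho(g)\pi_0\in\{\pi_0,\iota(\pi_0)\}$, and recognize $K$ as one member of a uniform family of subgroups. I expect $K$ to be the fixed-point subgroup of an involutive automorphism of $\tC_n$ (so that $(\tC_n,K)$ is a symmetric pair), or a reflection subgroup isomorphic to an affine Weyl group of type $\tB_n$, which would also explain why the construction naturally produces Gelfand subgroups in both $\tC_n$ and $\tB_n$. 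The hard part will be the verification that \emph{every} $K$-double coset is $\tau$-stable: this is a genuinely group-theoretic fact that does not follow from transitivity alone, and it is precisely the content of the general Gelfand-subgroup construction announced in the abstract. My plan would therefore be to prove that general construction once — matching $K$ to the hypotheses of a symmetric-pair/double-coset-symmetry lemma — and then to deduce Theorem~\ref{thm:main1} by specializing it to the subgroup $K$ arising from the arc-permutation action.
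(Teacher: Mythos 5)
Your plan is correct and coincides in essence with the paper's own route: the paper likewise reduces Theorem~\ref{thm:main1} to a (proto-)Gelfand property of the stabilizer of an $\iota$-class, computes the point stabilizer explicitly in the affine-permutation model (Proposition~\ref{t:H_k}; for arc permutations $H_0 \cong \tA_{n-1}$, and the class stabilizer contains the double cover $K_0 = \langle H_0, v\rangle$), and verifies Gelfand's trick with plain, untwisted inversion by exhibiting an involution in every coset (Corollary~\ref{Gelfand:cor_new}, Theorem~\ref{thm:main2_new}, Corollary~\ref{cor:main}), proving the general family statement once and specializing, exactly as you propose. The only point where your expectation deviates is the structure of $K$ --- it is a double cover of a type-$\tA_{n-1}$ reflection subgroup, not a type-$\tB_n$ subgroup or the fixed points of an involutive automorphism --- but since your plan calls for computing $K$ concretely rather than relying on that guess, this does not affect the argument.
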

	
	Similar statements hold for $\tC_n$-actions on triangulations, words and trees;
	see Propositions ~\ref{prop:CTFT} and~\ref{prop:Cn-words} below.

	\medskip
	
	To prove Theorem~\ref{thm:main1} and the other statements, we introduce the notion of proto-Gelfand subgroups.

	\begin{defn}
		Let $H\leq G$ be finite groups.
		The pair $(G,H)$ is called a {\em Gelfand pair} if
		the permutation representation of $G$ on the cosets of $H$ is multiplicity-free.  
		If $H\leq G$ are infinite groups, we say that $H$
		is a {\em proto-Gelfand subgroup} of $G$ if, 
		for every finite quotient $\varphi(G)$ of $G$, $(\varphi(G),\varphi(H))$ is a Gelfand pair.
	\end{defn}
	
	
	We construct proto-Gelfand subgroups via a generalized flip action of $G$ 
	on an infinite analogue of 
	the arc permutation set, 
	using affine permutations for an explicit presentation of the stabilizer.
	
	\medskip
	
	Let $G$ be a Weyl group of type $\tC_n$ ($n\geq 2$), with Coxeter generators $s_0, \ldots, s_n$, 
	and let 
	$\ZZ_3=\{1,0,-1\}$.
	
	\begin{defn}\label{def:c-action_general_new}
		Define a group homomorphism $\rho: G \to \Sym(\zzn)$
		by
		\begin{equation*}
		\rho(s_i)(a_1,a_2,\ldots,a_n,b) :=
		\begin{cases}
		(-a_1,a_2,\ldots,a_n,b),&\text{if } i=0;\\
		(a_1,\ldots,a_{i-1},a_{i+1},a_i,a_{i+2},\ldots,a_n,b),&\text{if } 0<i<n;\\
		(a_1,\ldots,-a_n,b+a_n), &\text{if } i=n,
		\end{cases}
		\end{equation*}
		for any $(a_1,a_2,\ldots,a_n,b) \in \zzn$.
	\end{defn}
	
	\begin{observation}\label{obs:relations}
		$(\rho(s_i))_{i=0}^{n}$ satisfy all the defining relations of type $\tC_n$, so that 
		$\rho$ is 
		a well-defined group homomorphism. 
	\end{observation}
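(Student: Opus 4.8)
The plan is to verify, one family at a time, the four kinds of defining relations of type $\tC_n$ directly on the map $\rho$, exploiting the fact that each generator acts only on a small localized block of coordinates. First I would dispose of the involutivity relations $\rho(s_i)^2=\mathrm{id}$: for $i=0$ this is double negation of $a_1$, for $0<i<n$ it is a transposition applied twice, and for $i=n$ one checks that applying the rule twice sends $(a_n,b)\mapsto(-a_n,b+a_n)\mapsto(a_n,b)$, so that the sign flip and the shift of $b$ both undo. Next I would treat the commutation relations $(\rho(s_i)\rho(s_j))^2=\mathrm{id}$ for $|i-j|>1$: the key observation is that $s_0$ moves only the coordinate $a_1$, each interior $s_i$ moves only $\{a_i,a_{i+1}\}$, and $s_n$ moves only $\{a_n,b\}$; when $|i-j|>1$ these supports are disjoint, so the two maps commute. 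This is where the hypothesis $n\ge 2$ enters, keeping the two ends of the diagram genuinely separated.

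For the braid relations $(\rho(s_i)\rho(s_{i+1}))^3=\mathrm{id}$ with $1\le i\le n-2$, I would note that $\rho(s_i)$ and $\rho(s_{i+1})$ are the adjacent transpositions exchanging $(a_i,a_{i+1})$ and $(a_{i+1},a_{i+2})$ respectively, so their restriction to the block $(a_i,a_{i+1},a_{i+2})$ is exactly the standard action of the two Coxeter generators of $\S_3$, for which the order-$3$ braid relation is classical.

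The two order-$4$ relations at the ends of the diagram are the substantive part. For $(\rho(s_0)\rho(s_1))^4=\mathrm{id}$ I would restrict attention to $(a_1,a_2)$ and compute that the composite sends $(a_1,a_2)\mapsto(-a_2,a_1)$, a linear map whose fourth power is the identity. The main obstacle is the relation $(\rho(s_{n-1})\rho(s_n))^4=\mathrm{id}$, because this is the only place where the infinite coordinate $b$ couples to the finite part. Here I would restrict to $(a_{n-1},a_n,b)$, check that the composite $u:=\rho(s_{n-1})\rho(s_n)$ acts by $(x,y,b)\mapsto(-y,x,b+y)$, and then iterate, tracking the increments to $b$. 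The content is that over the four applications the shifts applied to $b$ telescope to zero, the successive increments being $+y,+x,-y,-x$, while the $(a_{n-1},a_n)$ part simultaneously runs through the same order-$4$ rotation as before. Crucially, these increments are honest integer additions of the values in $\{-1,0,1\}$, so the cancellation is exact and independent of the reduction modulo $3$ on the $a$-coordinates. Once all four families are checked, every defining relation of $\tC_n$ holds for $(\rho(s_i))_{i=0}^{n}$, so by the universal property of the Coxeter presentation $\rho$ extends to a well-defined group homomorphism.
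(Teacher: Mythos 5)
Your verification is correct and is exactly the routine check that the paper leaves implicit (the Observation is stated there without proof): you dispose of all four families of relations, and you correctly isolate the only delicate point, namely that for $(\rho(s_{n-1})\rho(s_n))^4$ the composite acts by $(x,y,b)\mapsto(-y,x,b+y)$ and the successive increments $+y,+x,-y,-x$ to the $\ZZ$-coordinate telescope to zero as genuine integers, which works because the representatives $\{1,0,-1\}$ of $\ZZ_3$ are closed under integer negation. Your computation $(a_1,a_2)\mapsto(-a_2,a_1)$ at the other end of the diagram, the disjoint-support argument for the commutations, and the appeal to the universal property of the Coxeter presentation are all as intended, so nothing is missing.
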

	
	The affine Weyl group of type $\tB_n$ is an index $2$ subgroup of $\tC_n$, and therefore also acts naturally on $\zzn$. Here is our main result.
	\begin{theorem}\label{thm:main12}
		Let $G$ be a Weyl group of type $\tB_n$ or $\tC_n$. For every $\omega\in \zzn$ whose stabilizer is not all of $G$ (namely, $\omega$ is not $G$-invariant), there exists a
		double cover of the stabilizer which is a proto-Gelfand subgroup of $G$. 
	\end{theorem}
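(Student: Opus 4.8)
The plan is to exhibit the double cover through a single $G$-equivariant involution, and then to establish the Gelfand property by a concrete analysis of the diagonal $G$-action on pairs. First I would introduce the map $\iota:\zzn\to\zzn$ given by $\iota(a_1,\dots,a_n,b)=(-a_1,\dots,-a_n,-b)$, and check on the generators $s_0,\dots,s_n$ that it commutes with $\rho$ (the only nontrivial case is $s_n$, where both orders send $(a_1,\dots,a_n,b)$ to $(-a_1,\dots,-a_{n-1},a_n,-b-a_n)$). Its unique fixed point is the origin $(0,\dots,0,0)$, which is also the only $G$-invariant configuration, so for non-invariant $\omega$ one has $\iota\omega\neq\omega$. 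A short computation on the generators shows that the $G$-orbit of $\omega$ is determined solely by the number $m\ge1$ of nonzero coordinates (permutations and the sign changes fix the number of zeros, while discharging one nonzero coordinate at position $n$ shifts $b$ by $\pm1$, so $b$ ranges over all of $\ZZ$). Consequently $\iota\omega$, having the same $m$, lies in $G\omega$, and setting $H:=\Stab_G(\{\omega,\iota\omega\})$ gives $\Stab_G(\omega)\le H\le G$ with $[H:\Stab_G(\omega)]=2$: this is the desired double cover, and $G/H$ is $G$-equivariantly the orbit of $\omega$ modulo $\iota$.

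Next I would reduce proto-Gelfandness to a symmetry of double cosets. For any finite quotient $\varphi:G\to Q$, the pair $(Q,\varphi(H))$ is Gelfand once every $\varphi(H)$-double coset is inverse-closed, since then the inversion anti-automorphism acts as the identity on the Hecke algebra, which is therefore commutative. The uniform condition $HgH=Hg^{-1}H$ for all $g\in G$ descends to every quotient, because $\varphi(H)\bar g\varphi(H)=\varphi(HgH)=\varphi(Hg^{-1}H)=\varphi(H)\bar g^{-1}\varphi(H)$; equivalently, the transitive action of $G$ on $G/H$ has only self-paired orbitals. Thus it suffices to prove: for all $\bar\alpha,\bar\beta$ there is $g\in G$ with $g\bar\alpha=\bar\beta$ and $g\bar\beta=\bar\alpha$.

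To verify this I would compute the orbitals explicitly, using the signed-permutation/affine description of $G$ to read off invariants of a pair $(\alpha,\beta)$ with $\alpha=(a_1,\dots,a_n,b)$ and $\beta=(a_1',\dots,a_n',b')$. Recording at each position the token $(a_i,a_i')\in\ZZ_3^2$, the generators $s_0,\dots,s_{n-1}$ permute tokens and negate them (via $(x,y)\mapsto(-x,-y)$), while $s_n$ additionally discharges the token at position $n$, shifting $(b,b')$ by its current value. Hence a $G$-orbit on pairs is classified by the multiset of token types in $\ZZ_3^2/\{\pm1\}$ (five types: zero, $\alpha$-only, $\beta$-only, and the two mixed types $+$ and $-$) together with the class of $(b,b')$ in $\ZZ^2/\Lambda$, where $\Lambda\le\ZZ^2$ is the lattice generated by the present token values. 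Two facts drive the proof: since $\alpha,\beta$ lie in the same orbit they have equally many nonzeros, forcing the $\alpha$-only and $\beta$-only counts to coincide; and $\Lambda$ is invariant under the coordinate swap $(x,y)\mapsto(y,x)$. The transpose $(\beta,\alpha)$ therefore has the same token multiset as $(\alpha,\beta)$, and only the residue $(b,b')\mapsto(b',b)$ must be matched.

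The residue is matched using the two available $\iota$-moves $\tau_\alpha,\tau_\beta$ (replacing $\alpha$ or $\beta$ by its $\iota$-image), each of which swaps the two mixed token counts and negates $b$ or $b'$. If an $\alpha$-only token is present then $\Lambda=\ZZ^2$ and there is no residue invariant, so the transpose already lands in the same orbital; if every nonzero token is mixed then $\Lambda$ has rank at most one and the class of $(b',b)$ is brought back to that of $(b,b')$ by $\tau_\alpha\tau_\beta$, which negates both $b$ and $b'$ while fixing the token multiset. This would establish self-pairing in every case, hence Theorem~\ref{thm:main12} for type $\tC_n$. For type $\tB_n\le\tC_n$ the involution $\iota$ still commutes with the action, and the same token analysis applies, but now with a possibly finer orbit decomposition and a sublattice $\Lambda'\le\Lambda$ of index at most two. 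I expect this to be the main obstacle: controlling the $b$-residues on the index-two subgroup and confirming that the extra orbitals produced in passing from $\tC_n$ to $\tB_n$ remain self-paired. The explicit presentation of $\Stab_G(\omega)$ through affine permutations is precisely what makes both the lattice $\Lambda$ and these residue computations tractable.
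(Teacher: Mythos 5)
Your route for type $\tC_n$ is genuinely different from the paper's, and in outline it is viable. The paper computes the stabilizer explicitly (Proposition~\ref{t:H_k}: $H_k\cong \tC_k\times\tA_{n-k-1}$) and then exhibits an involution in every left coset of the double cover $K_k=\langle H_k,v\rangle$ via the decomposition $G=T\langle x_n\rangle V K_k$; you instead take the double cover to be the set-stabilizer of $\{\omega,\iota\omega\}$ (for $\omega=\omega_k$ this is exactly $K_k$, since $\rho(v)$ negates the $a_i$ and fixes $b$) and prove $HgH=Hg^{-1}H$ by showing that all orbitals on pairs of $\iota$-classes are self-paired; your reduction to finite quotients is sound and slightly more general than the paper's ``involution in each double coset'' criterion. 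But two steps need repair. First, the completeness of your orbit invariants (token multiset mod $\pm$ together with $(b,b')$ mod $\Lambda$) is asserted, not proved — the discharge-and-renegate moves you name do prove it, but that argument must be written out. Second, your claim that $\Lambda$ has rank at most one when every nonzero token is mixed is wrong: if both mixed types $\pm(1,1)$ and $\pm(1,-1)$ occur, $\Lambda$ is the rank-two even sublattice $\{(u,v)\mid u\equiv v \pmod 2\}$. Moreover your proposed fix $\tau_\alpha\tau_\beta$ works only when $\Lambda=\ZZ\cdot(1,1)$ (the defect $(b+b',b+b')$ lies in $\Lambda$); when $\Lambda=\ZZ\cdot(1,-1)$ it fails, but there the plain transpose already suffices since $(b',b)-(b,b')=(b'-b)\cdot(1,-1)\in\Lambda$, and in the even-lattice case the transpose also works directly. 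So self-pairing does hold in all cases, but via a three-way subdivision you did not make. (Also a harmless slip: every $(0,\ldots,0,b)$ is $G$-invariant, not only the origin; non-invariance still forces some $a_i\neq 0$, hence $\iota\omega\neq\omega$.)

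The genuine gap is type $\tB_n$, which is half the theorem and which you explicitly leave as a plan (``I expect this to be the main obstacle''). In $G_1\cong\tB_n$ only elements of even exponent sum are available, so $s_n\notin G_1$ and a single discharge at position $n$ is not a legal move; the achievable shifts of $(b,b')$ acquire a parity constraint, the orbits on pairs can split, and you must also re-verify that $\iota\omega$ still lies in the $G_1$-orbit of $\omega$ so that the set-stabilizer remains a double cover of $\Stab_{G_1}(\omega)$. The paper's proof of this case (Theorem~\ref{thm:main2_B}) is not a routine repetition of type $\tC$: the decomposition becomes $G_1=T\{1,x_1x_n\}\langle x_n^2\rangle V M_k$, and involutive representatives require separate constructions in the three cases $\tau(n)\neq n$; $\tau(n)=n$, $\tau(1)\neq 1$; and $\tau(n)=n=\tau(1)$, the last needing a readjusted $w'$ satisfying $x_1^{ww'}=x_1^{-1}$ in addition to $x_n^{ww'}=x_n^{-1}$. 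Your remark that one gets ``a sublattice $\Lambda'\le\Lambda$ of index at most two'' names the phenomenon but does not verify that the finer orbitals remain self-paired, so as written the $\tB_n$ assertion of the theorem is unproved.
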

	
	See Theorems~\ref{thm:main2_new} and~\ref{thm:main2_B} below. 
	The stabilizer itself is not proto-Gelfand;
	see Remark~\ref{rem:proto_HK}. See also Proposition~\ref{prop:m=2} for finite versions.
	
	\smallskip
	
	The proof applies Gelfand's trick,     
	showing that, in a finite image,
	every double coset of the double cover is self-inverse. 
	We work within the combinatorial realization of an affine Weyl group as a group of affine permutations, as in \cite[Chapter 8]{BB}. 
	The subgroups involved are also described using generating sets.

	\bigskip
	
	The rest of the paper is organized as follows. 
	Various combinatorial flip actions of the groups of type $\tC_n$ are presented in Section~\ref{s:combinatorial_examples}.
	In Section~\ref{sec:Gelfand_gen} we generalize this setting and prove (the $\tC_n$ case of) Theorem~\ref{thm:main12}.
	The analogous result for type $\tB_n$ is proved in Section~\ref{sec:tB}. 
	Theorem~\ref{thm:main12} is applied in Section~\ref{sec:combin-revisited} to prove the multiplicity-freeness of permutation modules resulting from the previously introduced flip actions, see Figure~\ref{fig:overview} (in Section~\ref{sec:combin-revisited}) for a summary. 
	Final remarks and open problems are discussed in Section~\ref{sec:final}. In particular, we show in Proposition~\ref{prop:m=2} that the none of the original finite combinatorial actions is multiplicity-free. Thus the original Question~\ref{qu:original} is also answered.
	
	
	\section{Combinatorial flip actions}
	\label{s:combinatorial_examples}
	
	Flip actions 
	on various combinatorial objects are presented in this section. 
	It will be shown 
	that restrictions of these flip actions to distinguished subsets
	determine well defined $\tC_n$-actions.
	One of the goals of this paper is to prove that 
	these $\tC_n$-actions, modulo a natural involution, are 
	members of a wider family of multiplicity-free
	permutation modules.

	\subsection{Triangle-free triangulations
	}\label{ss:triangles}
	
	A {\em triangulation} (without extra vertices) 
	of a convex $n$-gon $P_{n}$, $n > 4$,
	is a set of $n-3$ non-crossing chords in $P_n$.
	The chords divide $P_n$ into $n-2$ triangles. 
	Each chord in a triangulation belongs to two adjacent triangles, whose union is a quadrangle. 
	Replacing the chord by the other diagonal of that quadrangle is a {\em flip} of the chord,
	yielding a different triangulation.
	The graph of all triangulations of a convex polygon, with edges corresponding to flips,
	was studied in a seminal paper by Sleator, Tarjan and Thurston~\cite{STT}.
	A partial action of the Thompson group on triangulations, by flips, was introduced by Dehornoy~\cite{Dehornoy}.
	A restriction of the flip action to a distinguished subset of triangulations was 
	considered in~\cite{AFR}, and will be described in the rest of this subsection.
	
	

	\medskip

	Label the vertices of a convex $n$-gon $P_{n}$ ($n > 4$) by the
	elements $0,\ldots,n-1$ of the additive cyclic group $\ZZ_{n} = \ZZ / n\ZZ$.
	Each original edge of the polygon is called an {\em external edge} of the triangulation; 
	all other edges of the triangulation are called {\em internal edges}, or {\em chords}.
	A triangulation of a convex $n$-gon $P_{n}$ is called
	{\em internal-triangle-free}, or simply {\em triangle-free},
	if it contains no triangle with $3$ internal
	edges. The set of all triangle-free triangulations of $P_{n}$ is
	denoted $TFT(n)$.
	A chord in $P_{n}$ is called {\em short} if it connects the
	vertices labeled $i-1$ and $i+1$, for some $i\in \ZZ_{n}$. A
	triangulation is triangle-free if and only if it contains only two
	short chords~\cite[Claim 2.3]{AFR}.
	A {\em proper labelling} (or {\em orientation}, or {\em colouring}) of a triangulation $T\in TFT(n)$ is a
	labelling of the chords by $0,\ldots,n-4$ such that
	\begin{enumerate}
		\item
		One of the short chords is labelled $0$.
		\item
		If a triangle has exactly two internal edges then
		their labels are consecutive integers $i$, $i+1$.
	\end{enumerate}
	It is easy to see that each $T \in TFT(n)$ has exactly two proper colourings.
	The set of all properly labelled triangle-free triangulations is denoted $CTFT(n)$.
	In other words, a colored triangle-free triangulation in $CTFT(n)$ 
	may be identified with a sequence of $n-3$ non-intersecting diagonals in 
	$P_n$, 
	$(d_0,d_1,\dots,d_{n-4})$,  such that any two consecutive diagonals have a common vertex
	and the sequence starts and ends with a short chord.
	A flip in a colored triangulation preserves the colour of the flipped diagonal. Clearly, $|CTFT(n)|=n2^{n-4}$, as one short diagonal is freely chosen, each of the consecutive diagonals may stem from either end of the previous diagonal.

	The 
	group $\tC_{n}$ acts naturally on $CTFT(n+4)$ by flips:
	Each generator $s_i$ acts on each $T\in CTFT(n+4)$ by flipping the chord labelled $i$, provided that
	the result still belongs to $CTFT(n+4)$.
	If this is not the case then $T$ is unchanged by $s_i$.
	
	\begin{proposition}\label{t.action1}\cite[Proposition 3.2]{AFR}
		This defines a transitive $\tC_{n}$-action on $CTFT(n+4)$.
	\end{proposition}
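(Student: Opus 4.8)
The plan is to show two things: (i) the flip action of each generator $s_i$ is a well-defined involution on $CTFT(n+4)$, and (ii) any two colored triangle-free triangulations are connected by a sequence of flips, so that the action is transitive. For the verification that $\rho$ is a genuine group action, I would lean on the combinatorial encoding given just above the statement: an element of $CTFT(n+4)$ is a sequence $(d_0,d_1,\dots,d_n)$ of $n+1$ non-intersecting diagonals in $P_{n+4}$, starting and ending with a short chord, with consecutive diagonals sharing a vertex. A flip $s_i$ acts by replacing $d_i$ with the opposite diagonal of the quadrangle formed by the two triangles adjacent to $d_i$, subject to the result remaining in $CTFT(n+4)$. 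The first step is therefore to describe, explicitly in terms of the encoding, what $s_i$ does and when it is a genuine (non-identity) flip.

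First I would check the Coxeter relations of type $\tC_n$ for the operators $\rho(s_i)$. Each $s_i$ is manifestly an involution, since flipping the chord labelled $i$ twice returns the original quadrangle diagonal; when the flip is forbidden, $s_i$ acts as the identity, which is also an involution. For the commutation relations $(s_is_j)^2=1$ with $|i-j|>1$, the key observation is that chords labelled $i$ and $j$ whose indices differ by more than one lie in disjoint quadrangles, so the two flips act on independent parts of the triangulation and commute. The braid relations $(s_is_{i+1})^3=1$ (for $1\le i\le n-2$) and $(s_is_{i+1})^4=1$ (for $i=0$ and $i=n-1$) require analyzing the local configuration of the two adjacent chords $d_i,d_{i+1}$ sharing a vertex, together with the extra rigidity imposed at the two ends by the short-chord condition, which accounts for the order-$4$ relations at the boundary. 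These local checks are finite case analyses on the fan of triangles around the shared vertex.

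For transitivity I would argue that the encoding $(d_0,\dots,d_n)$ admits enough freedom to move between any two sequences: the remark preceding the statement already notes that one short diagonal is chosen freely and each subsequent diagonal stems from either endpoint of the previous one, giving the count $|CTFT(n+4)|=(n+4)2^{n}$. I would show that the flips generate moves that (a) rotate the choice of the initial short chord around the polygon and (b) toggle the endpoint from which each successive diagonal emanates; together these reach every admissible sequence from a fixed base point. This is where I expect the main obstacle to lie: one must verify that at the two ends of the sequence the short-chord constraint does not obstruct connectivity, i.e.\ that the boundary generators $s_0$ and $s_n$ suffice to advance the short chord along the polygon while keeping the whole sequence in $CTFT(n+4)$. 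Since this is precisely the content of \cite[Proposition 3.2]{AFR}, I would in practice cite that reference for the detailed connectivity argument, presenting the local involution-and-relation checks above as the conceptual core.
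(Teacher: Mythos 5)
The paper contains no proof of this proposition --- it is imported wholesale from \cite[Proposition 3.2]{AFR} --- and your proposal, after outlining the involution, commutation and braid-relation checks, likewise delegates the one genuinely hard step (connectivity of $CTFT(n+4)$ under flips, including the boundary behaviour of $s_0$ and $s_n$) to that same citation, so in substance you take the paper's approach. Your sketch is moreover consistent with the self-contained route the paper itself supplies later, in Subsection~\ref{sec:combin-revisited.triangulations}: there the encoding $\phi:CTFT(n+4)\to\Omega_{n,0,n+4}$ intertwines the flip action with $\rho$, so the Coxeter relations follow at once from Observation~\ref{obs:relations} and transitivity reduces to the transparent action on sign vectors in $\{\pm 1\}^n\times\ZZ_{n+4}$, bypassing the delicate local case analyses (order-$3$ versus order-$4$ relations at the ends) that your plan would otherwise have to carry out by hand.
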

	
	
	\begin{example}
		Let $T\in CTFT (8)$ be defined by the following sequence of $5$ non-intersecting diagonals of $P_8$:
		$(1,7),(1,6),(1,5),(2,5),(2,4)$. Then
		\[
		s_0 T = ( (6,8),(1,6),(1,5),(2,5),(2,4) ), \ \ \ \ 
		s_1 T = T, \ \ \ \ 
		s_2 T = ( (1,7),(1,6),(2,6),(2,5),(2,4) ), 
		\text{ etc.;}
		\]
		see Figure~\ref{fig:1}.
		
		\begin{figure}[htb]
			\begin{center}
				\begin{tikzpicture}[scale=0.9]
				\draw (-1,1.7) node {$T=$};
				\fill (2.4,3.4) circle (0.1) node[above]{\tiny 1}; \fill (3.4,2.4)
				circle (0.1) node[right]{\tiny 2}; \fill (3.4,1) circle (0.1)
				node[right]{\tiny 3}; \fill (2.4,0) circle (0.1) node[below]{\tiny
					4}; \fill (1,0) circle (0.1) node[below]{\tiny 5}; \fill (0,1)
				circle (0.1) node[left]{\tiny 6}; \fill (0,2.4) circle (0.1)
				node[left]{\tiny 7}; \fill (1,3.4) circle (0.1) node[above]{\tiny
					8};
				
				\draw
				(1,3.4)--(2.4,3.4)--(3.4,2.4)--(3.4,1)--(2.4,0)--(1,0)--(0,1)--(0,2.4)--(1,3.4);
				\draw(3.4,2.4)--(1,0);\draw (0,1)--(2.4,3.4); \draw[red]
				(2.4,3.4)--(1,0);\draw (3.4,2.4)--(2.4,0); \draw
				(0,2.4)--(2.4,3.4);
				
				
				
				\draw (5,1.7) node {, \ $s_2 T=$};
				\fill (8.4,3.4) circle (0.1) node[above]{\tiny 1}; \fill (9.4,2.4)
				circle (0.1) node[right]{\tiny 2}; \fill (9.4,1) circle (0.1)
				node[right]{\tiny 3}; \fill (8.4,0) circle (0.1) node[below]{\tiny
					4}; \fill (7,0) circle (0.1) node[below]{\tiny 5}; \fill (6,1)
				circle (0.1) node[left]{\tiny 6}; \fill (6,2.4) circle (0.1)
				node[left]{\tiny 7}; \fill (7,3.4) circle (0.1) node[above]{\tiny
					8};
				
				\draw (7,3.4)--(8.4,3.4); 
				\draw (7,0)--(6,1)--(6,2.4)--(7,3.4);
				\draw(9.4,2.4)--(7,0);\draw (6,1)--(8.4,3.4); 
				\draw
				(8.4,0)--(9.4,1)--(9.4,2.4)--(8.4,3.4); 
				\draw (8.4,0)--(9.4,2.4);
				\draw (8.4,0)--(7,0); 
				\draw (6,2.4)--(8.4,3.4);
				\draw[red] (9.4,2.4)--(6,1);
				\end{tikzpicture}
			\end{center}
			\caption{$\tC_4$-action on $CTFT(8)$.} 
			\label{fig:1}
		\end{figure}
		
	\end{example}
	
	Let $\iota$ be the involution on the 
	set of triangle-free triangulation $CTFT(n+4)$ defined 
	as follows:
	for every $T\in CTFT(n+4)$
	let $\iota(T)$ be the 
	colored triangle-free triangulation, 
	obtained via a reflection along the 
	line which crosses the angle of the
	vertex labelled by $n+4$.
	See an example in Figure~\ref{fig:CTFT}.
	\begin{figure}[htb]
		\begin{center}
			\begin{tikzpicture}[scale=0.9]
			\draw (-1,1.7) node {$T=$};
			\fill (2.4,3.4) circle (0.1) node[above]{\tiny 1}; \fill (3.4,2.4)
			circle (0.1) node[right]{\tiny 2}; \fill (3.4,1) circle (0.1)
			node[right]{\tiny 3}; \fill (2.4,0) circle (0.1) node[below]{\tiny
				4}; \fill (1,0) circle (0.1) node[below]{\tiny 5}; \fill (0,1)
			circle (0.1) node[left]{\tiny 6}; \fill (0,2.4) circle (0.1)
			node[left]{\tiny 7}; \fill (1,3.4) circle (0.1) node[above]{\tiny
				8};
			
			\draw
			(1,3.4)--(2.4,3.4)--(3.4,2.4)--(3.4,1)--(2.4,0)--(1,0)--(0,1)--(0,2.4)--(1,3.4);
			\draw (0,1)--(1,3.4);
			\draw(3.4,2.4)--(1,0);\draw (0,1)--(2.4,3.4); \draw
			(2.4,3.4)--(1,0);\draw (3.4,2.4)--(2.4,0); 
			
			
			
			\draw (5,1.7) node {, \ $\iota(T) =$};
			\fill (8.4,3.4) circle (0.1) node[above]{\tiny 1}; 
			\fill (9.4,2.4)circle (0.1) node[right]{\tiny 2}; 
			\fill (9.4,1) circle (0.1) node[right]{\tiny 3}; 
			\fill (8.4,0) circle (0.1) node[below]{\tiny 4}; 
			\fill (7,0) circle (0.1) node[below]{\tiny 5}; 
			\fill (6,1) circle (0.1) node[left]{\tiny 6}; 
			\fill (6,2.4) circle (0.1) node[left]{\tiny 7}; 
			\fill (7,3.4) circle (0.1) node[above]{\tiny 8};
			
			\draw (7,3.4)--(8.4,3.4); 
			\draw (7,0)--(6,1)--(6,2.4)--(7,3.4);
			\draw(9.4,1)--(6,1);
			\draw (6,2.4)--(9.4,1); 
			\draw (6,1)--(8.4,0); 
			\draw
			(8.4,0)--(9.4,1)--(9.4,2.4)--(8.4,3.4); 
			\draw (8.4,0)--(7,0); 
			\draw (7,3.4)--(9.4,2.4);
			\draw (6,2.4)--(9.4,2.4);
			\end{tikzpicture}
		\end{center}
		\caption{The involution $\iota$ on $CTFT(8)$ is obtained by a reflection across the diagonal connecting vertices $8$ and $4$.} 
		\label{fig:CTFT}
	\end{figure}
	
	One can easily verify that for every $T\in CTFT(n+4)$ and $0\le i\le n$
	\[
	s_i(\iota T)=\iota (s_i T),
	\]
	Hence the $\tC_n$-action on 
	the quotient $CTFT(n+4)/\iota$ is well defined.
	The following statement will be proved in Section~\ref{sec:combin-revisited}.
	
	\begin{proposition}\label{prop:CTFT}
		The $\tC_n$-action on $CTFT(n+4)/\iota$
		is multiplicity-free.
	\end{proposition}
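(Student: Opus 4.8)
The plan is to realize the flip action on $CTFT(n+4)$ as a finite instance of the general action $\rho$ of Definition~\ref{def:c-action_general_new}, and then to deduce Proposition~\ref{prop:CTFT} from Theorem~\ref{thm:main12}. The first observation is that the subset $\{1,-1\}^n\times\ZZ\subseteq\zzn$ is preserved by every $\rho(s_i)$: the generator $s_0$ negates $a_1$, the middle generators permute the $a_j$, and $s_n$ negates $a_n$ while changing $b$ by $\pm1$, so no generator ever produces a $0$ entry. Reducing the last coordinate modulo $n+4$ then gives a finite $\tC_n$-action on $\{1,-1\}^n\times\ZZ_{n+4}$, a set of cardinality $2^n(n+4)=|CTFT(n+4)|$.

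The first substantive step is to construct an explicit $\tC_n$-equivariant bijection $\Phi\colon CTFT(n+4)\to\{1,-1\}^n\times\ZZ_{n+4}$. Using the description of a colored triangle-free triangulation as a sequence $(d_0,\dots,d_n)$ of chords in which consecutive diagonals share a vertex and which begins and ends with a short chord, I would encode in $b\in\ZZ_{n+4}$ the position of the initial short chord $d_0$, and in each $a_i\in\{1,-1\}$ the binary choice of which endpoint is used in the step from $d_{i-1}$ to $d_i$ (equivalently, the local flip state of the chord labelled $i$); the counting remark $|CTFT(n+4)|=(n+4)2^n$ shows these data are exactly $n$ free signs plus one cyclic position. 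A direct check that each flip $s_i$ transforms these data precisely as $\rho(s_i)$ --- the negations at the two ends $s_0,s_n$ and the transposition of neighbouring signs in between --- identifies the flip action with $\rho$. Since the flip action is transitive by Proposition~\ref{t.action1}, $\Phi$ is an isomorphism of $\tC_n$-sets, so $CTFT(n+4)\cong G/K$ with $K=\Stab_{\tC_n}(\omega_0)$ for $\omega_0=\Phi(T_0)$; because flips act nontrivially, $\omega_0$ is not $G$-invariant.

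Next I would match the involution. The reflection $\iota$ reverses the polygon, hence reverses the diagonal sequence and negates orientations, producing an involution $\bar\iota$ of $\{1,-1\}^n\times\ZZ_{n+4}$ that commutes with the $\tC_n$-action (this is the content of the already-noted identity $s_i(\iota T)=\iota(s_iT)$). A $G$-equivariant involution of the transitive $G$-set $G/K$ is realized by right translation $gK\mapsto gtK$ for some $t\in N_G(K)$ with $t^2\in K$; since $\iota$ is nontrivial, $t\notin K$. Hence $H:=\langle K,t\rangle = K\cup Kt$ is the index-two overgroup of $K$ --- the double cover of the stabilizer furnished by Theorem~\ref{thm:main12} --- and passing to $\langle\bar\iota\rangle$-orbits gives a $\tC_n$-isomorphism $CTFT(n+4)/\iota\cong G/H$.

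Finally I would invoke the main theorem. As $\omega_0$ is not $G$-invariant, Theorem~\ref{thm:main12} in its $\tC_n$ form (Theorem~\ref{thm:main2_new}) guarantees that the double cover $H$ is a proto-Gelfand subgroup of $G=\tC_n$. The action on $CTFT(n+4)/\iota$ factors through a finite quotient $\varphi\colon\tC_n\to\bar G$ (its kernel has finite index, as explained after Question~\ref{qu:original}), so $(\bar G,\varphi(H))$ is a Gelfand pair by the definition of proto-Gelfand; equivalently, the permutation module $G/H\cong CTFT(n+4)/\iota$ is multiplicity-free, which is the assertion. The main obstacle is the middle step: pinning down the coordinate encoding $\Phi$ so that flips become \emph{exactly} the operators $\rho(s_i)$, and verifying that the geometric reflection $\iota$ corresponds precisely to the double-cover element $t$ (and not to an element already lying in $K$); once these combinatorial identifications are established, the multiplicity-freeness is a formal consequence of Theorem~\ref{thm:main12}.
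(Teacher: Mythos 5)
Your strategy is the same as the paper's: build an explicit $\tC_n$-equivariant bijection from $CTFT(n+4)$ onto $\Omega_{n,0,n+4}=\{1,-1\}^n\times\ZZ_{n+4}$, identify the reflection $\iota$ with the negation equivalence, and feed the resulting isomorphism $CTFT(n+4)/\iota\cong\Omega^\pm_{n,0,n+4}$ into the proto-Gelfand machinery (the paper does exactly this in Subsection~\ref{sec:combin-revisited.triangulations} via the map $\phi$, then cites Corollary~\ref{cor:main}). However, your final step has a genuine gap: the group $H=\langle K,t\rangle$ is \emph{not} the double cover furnished by Theorem~\ref{thm:main12}. That theorem (in the form of Theorem~\ref{thm:main2_new}) concerns $K_0=\langle H_0,v\rangle$, where $H_0$ is the stabilizer of $\omega_0$ for the action on the \emph{infinite} set $\zzn$; your $K$ is the stabilizer in the finite action, namely $H_0\Ker(\varphi_{n+4})$, which already contains all translations $x_i^{n+4}$, so $H$ properly contains $K_0$ and Theorem~\ref{thm:main12} does not apply to it verbatim. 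What closes the gap --- and is precisely how the proof of Corollary~\ref{cor:main} argues --- is the pair of observations that (a) $K_0\le H$, and (b) proto-Gelfandness passes to overgroups, since in every finite quotient $\varphi$ the permutation character $1_{\varphi(H)}^{\varphi(G)}$ is a constituent of $1_{\varphi(K_0)}^{\varphi(G)}$. Point (a) is not automatic either: $\rho(v)$ negates the sign coordinates but \emph{fixes} the last coordinate, whereas the class involution sends $(a_1,\ldots,a_n,b)\mapsto(-a_1,\ldots,-a_n,-b)$; hence $v$ stabilizes the $\iota$-class of a base point only when $2b\equiv 0\pmod{n+4}$, so you must base the orbit at $\omega_0=(1,\ldots,1,0)$ (harmless, by transitivity and conjugacy of stabilizers, but it must be said). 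This care is necessary because, as Remark~\ref{rem:proto_HK} and Proposition~\ref{prop:m=2} show, subgroups between $H_0$ and $G$ are not proto-Gelfand in general, so ``some index-two overgroup of the finite stabilizer'' is not by itself sufficient.

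A smaller point on the encoding: you propose to store in $b$ the position of the \emph{initial} short chord $d_0$. But $s_0$ flips $d_0$ and thereby moves its position, while $\rho(s_0)$ fixes $b$ and only negates $a_1$; so $b$ must record the \emph{terminal} short chord $d_n$ --- flipped by $s_n$, the unique generator whose $\rho$-action changes $b$ --- which is what the paper's $\phi$ does (its last entry is $b$ with $d_n=(b-1,b+1)$, and the signs record which end each $d_i$ extends the interval spanned by the later diagonals together with $b$). With these two corrections your argument coincides with the paper's proof; your $N_G(K)$-translation description of the equivariant involution is a slightly more abstract but valid substitute for the paper's direct verification $\iota(T)=\phi^{-1}(-\phi(T))$.
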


	\subsection{
		Arc permutations}\label{sec:arc}
	
	Denote a permutation $\pi\in \S_n$ by the sequence of values $[\pi(1),\dots,\pi(n)]$. Denote by $(i,j)$ the transposition interchanging $i$ and $j$. 
	
	\medskip
	
	An {\em interval} in the cyclic group $\ZZ_n$ is a subset of the form
	$\{i, i+1, \ldots, i+k\}$, where addition is modulo $n$.
	A permutation in the symmetric group $\S_n$ is an {\em arc
		permutation} if every suffix (equivalently prefix) forms an interval in $\ZZ_n$
	(where
	the letter $n$ is identified with zero). Denote by $\A_n$ the set of arc permutations in $\S_n$. Clearly, $|\A_n|=n2^{n-2}$.
	
	For example, the permutation 
	$[3,4,5,2,1]$
	is an arc permutation
	in $S_5$, but 
	$[6,3,4,5,2,1]$
	is not an arc permutation in
	$S_6$, since $\{5,2,1\}$ is an interval in $\ZZ_5$ but not in
	$\ZZ_6$.

	\bigskip
	
	A $\tC_n$-action on arc permutations $\A_{n+2}$ was introduced in~\cite{TFT2}.
	Denote the adjacent transposition $(i,i+1)$  by $\sigma_i$.
	

	\begin{defn}\label{def:c-action_arc}
		For every $0\le i\le n$ and $\pi\in \A_{n+2}$, define 
		\[\rho^A(s_i)(\pi) :=
		\begin{cases}
		\pi \sigma_{i+1},&\text{if }\pi \sigma_{i+1}\in \A_{n+2};\\
		\pi, &\text{otherwise.}
		\end{cases}
		\]
	\end{defn}
	
	It 
	was shown in~\cite{TFT2} 
	that the above map determines a well defined transitive 
	$\tC_{n}$-action on $\A_{n+2}$. One of our goals is to show that this action,  modulo the involution $\iota$ defined in Equation~\eqref{eq:def_L},
	is multiplicity-free. This result will be proved and generalized to partial arc permutations in Subsection~\ref{sec:partial_arc}.

	\subsection{
		Factorizations 
		and geometric trees}%
	
	\subsubsection{Factorizations of the Coxeter element}\label{ss:words}
	Recall the well known Hurwitz braid group  
	action on words~\cite{Hurwitz}.
	The braid group of type $A$ and rank $n-1$,  denoted ${\mathcal B}_n$,\footnote{not to be confused with the classical Weyl group of type $B$ and rank $n$, denoted $B_n$ in Subsection~\ref{sec:double_cover}.} 
	is generated by the set $\{b_1,\dots,b_{n-1}\}$,
	subject to the relations
	\[
	b_i b_j= b_j b_i \qquad \text{for}\ |j-i|>1,
	\]
	and
	\[
	b_i b_{i+1}b_i=b_{i+1} b_i b_{i+1}\qquad \text{for}\ 1 \le i < n.
	\]
	Let $G$ be a finite group, $C$ a conjugacy class in $G$,
	$g\in G$, and $n$ a positive integer. 
	Consider all factorizations of $g$ of length $n$, namely sequences (words) $w=(g_1,\dots,g_{n})$ of $n$ elements of $G$, such that $g_1g_2\cdots g_{n}=g$. 
	The Hurwitz action of the braid group ${\mathcal B}_n$ on this set of words is defined by 
	\begin{equation}\label{eq:braid}
	b_i(w):=(g_1,\ldots, g_{i-1},g_{i}g_{i+1}g_{i}^{-1}, g_i, g_{i+2},\ldots,g_{n}) \qquad (1 \le i \le n-1).
	\end{equation}
	Of special interest is the following example. 
	Let  $\{\sigma_i\mid 1\le i<n\}$ be the set of simple reflections (Coxeter generators) of the symmetric group $\S_n$ (Coxeter group of type $A_{n-1}$), and let 
	$\gamma_n := \sigma_1 \cdots \sigma_{n-1}$ be a Coxeter element in $\S_n$.
	We interpret $\sigma_i$ as the adjacent transposition $(i,i+1) \in \S_n$ and $\gamma_n$ as the $n$-cycle $(1,2,\dots,n) \in \S_n$. 
	Equation~\eqref{eq:braid} determines a braid group  ${\mathcal B}_n$-action on $F_{n+1}$,
	where $F_n$ is the set of all
	factorizations of $\gamma_n$ as a product of $n-1$ (not necessarily adjacent) transpositions.
	
	
	\smallskip
	
	A factorization $t_1\cdots t_{n-1}$ of the $n$-cycle $\gamma_n = (1,2,\dots,n)$ as a product of transpositions is called {\em linear} if, for every $1 \le i \le n-2$, $t_i$ and $t_{i+1}$ have a common letter: one of them is $(a,b)$ and the other is $(c,b)$, where $a,b,c$ are distinct integers.
	Denote the set of linear factorizations of $\gamma_n$ by $LF_n$. 
	By \cite[Proposition~1.3]{YK}, $|LF_n|=n2^{n-3}$.

	Using the ${\mathcal B}_{n+2}$-action on $F_{n+3}$ define
	the map $\rho^{LF}$ by
	\[\rho^{LF}(s_i)(w) :=
	\begin{cases}
	b_{i+1} (w) ,&\text{if } b_{i+1} (w)  \in LF_{n+3};\\
	b_{i+1}^{-1} (w) ,&\text{if } b_{i+1}^{-1} (w)\in LF_{n+3};\\
	w, &\text{otherwise.}
	\end{cases} \qquad (0\le i \le n,\, w \in LF_{n+3})
	\]
	\medskip
	
	Note that $b_i(w)\in LF_{n+3}$ and $b_i^{-1}(w)\in LF_{n+3}$
	are mutually exclusive conditions.
	
	\begin{example}
		Let $w = \left( (2,3),\, (1,3), \,(3,5),\, (3,4) \right) \in LF_5$. 
		Then $b_1(w) \in LF_5$ (but $b_1^{-1}(w) \not\in LF_5$), 
		so that $\rho^{LF}(s_0)(w)= b_1(w) = \left( (1,2),\, (2,3),\, (3,5),\, (3,4) \right)$;  
		$b_2(w), b_2^{-1}(w) \not\in LF_5$, so that $\rho^{LF}(s_1)(w) = w$;
		and $b_3^{-1}(w) \in LF_5$, so that 
		$\rho^{LF}(s_2)(w) = b_3^{-1}(w) = \left( (2,3),\, (1,3),\, (3,4),\,(4,5) \right)$.
	\end{example}
	
	\medskip
	
	\begin{proposition}\label{prop:Cn-words}
		The map $\rho^{LF}$ determines a well-defined $\tC_n$-action on $LF_{n+3}$. Furthermore, 
		there exists an involution $\iota:LF_{n+3} \rightarrow LF_{n+3}$
		such that the induced $\tC_n$-action on $LF_{n+3}/\iota$ is well-defined and multiplicity-free.
	\end{proposition}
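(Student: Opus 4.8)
The plan is to deduce all three assertions from the model action of Definition~\ref{def:c-action_general_new} and from Theorem~\ref{thm:main12}, by encoding linear factorizations as points of a finite quotient of $\zzn$. First I would build a $\tC_n$-equivariant bijection $\Psi$ between $LF_{n+3}$ and a single $\tC_n$-orbit $\mathcal O$ inside a finite quotient $\ZZ_3^n\times\ZZ_m$ of $\zzn$ (for a suitable modulus $m$), passing through the geometric tree attached to a factorization: reading the walk determined by the shared letters of consecutive transpositions records, at each of the $n$ internal positions, a value in $\ZZ_3=\{1,0,-1\}$, while a global offset is recorded modulo $m$. I would then check on generators that the restricted Hurwitz moves in the definition of $\rho^{LF}$ match the model generators $\rho(s_i)$: for $0<i<n$ the move $b_{i+1}^{\pm1}$ is the coordinate transposition $\rho(s_i)$; for $i=0$ it is the sign change $\rho(s_0)$ on the first coordinate; and for $i=n$ it is the boundary move $\rho(s_n)\colon(a_1,\dots,-a_n,b+a_n)$. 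Crucially, the two mutually exclusive options $b_{i+1}(w)\in LF_{n+3}$ and $b_{i+1}^{-1}(w)\in LF_{n+3}$ are exactly the cases in which the corresponding model generator displaces the point, while the remaining ``otherwise'' case is exactly a model fixed point; hence $\Psi$ intertwines $\rho^{LF}$ with the descended model action. Since the latter is a genuine $\tC_n$-action by Observation~\ref{obs:relations}, pulling it back through $\Psi$ shows that $\rho^{LF}$ satisfies all the defining relations of $\tC_n$, so it is well-defined and transitive.

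Next I would define $\iota$ on $LF_{n+3}$ by reversing the word $(t_1,\dots,t_{n+2})$ and simultaneously relabelling the ground set $\{1,\dots,n+3\}$ by the reflection of $\ZZ_{n+3}$ that carries $\gamma_{n+3}^{-1}$ back to $\gamma_{n+3}$, so that the result is again a linear factorization; this is visibly an involution. The point to verify, again on generators, is that under $\Psi$ the map $\iota$ corresponds to the model involution $\omega\mapsto-\omega$ (negating every $a_i$ together with the offset), which commutes with $\rho$ by the one-line check on each $s_i$. Commuting with the action then makes the $\tC_n$-action on $LF_{n+3}/\iota$ well-defined.

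Finally, for multiplicity-freeness, fix a base point $w_0\in LF_{n+3}$, set $\bar\omega=\Psi(w_0)$, and let $\omega\in\zzn$ be a lift; as the factorization is nontrivial, $\omega$ is not $\tC_n$-invariant, so Theorem~\ref{thm:main12} provides a double cover $\widetilde H$ of $H:=\Stab_{\tC_n}(\omega)$ that is proto-Gelfand, say $\widetilde H=\langle H,g_0\rangle$ with $g_0\cdot\omega=-\omega$. Writing $K:=\Stab_{\tC_n}(w_0)\supseteq H$ (the reduction $\zzn\to\ZZ_3^n\times\ZZ_m$ only enlarges stabilizers), the stabilizer of the $\iota$-class $[w_0]$ is $M:=\langle K,g_0\rangle\supseteq\langle H,g_0\rangle=\widetilde H$. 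The finite action on $LF_{n+3}/\iota$ factors through a finite quotient $\varphi(\tC_n)$, where by the proto-Gelfand property $(\varphi(\tC_n),\varphi(\widetilde H))$ is a Gelfand pair; since $\varphi(\widetilde H)\le\varphi(M)$, the module $\CC[\varphi(\tC_n)/\varphi(M)]\cong\CC[LF_{n+3}/\iota]$ is a direct summand of the multiplicity-free module $\CC[\varphi(\tC_n)/\varphi(\widetilde H)]$ (the trivial representation occurs once in the intermediate induction), hence is itself multiplicity-free.

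The main obstacle is the first step: producing the explicit encoding $\Psi$ and confirming that the three families of restricted Hurwitz moves match the three model generators. In particular, the boundary move $s_n$ couples the last $\ZZ_3$-coordinate with the offset, so matching this ``carry'' between the factorization side and the model side is the delicate computation; everything after that is formal, relying on Theorem~\ref{thm:main12} and the elementary fact that a Gelfand pair remains a Gelfand pair upon enlarging the subgroup.
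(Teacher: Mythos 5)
Your proposal is correct and takes essentially the same approach as the paper: the paper constructs exactly such an encoding $\phi : LF_{n+3} \to \Omega_{n,0,n+3}$ (offset read from $t_1$, one $\ZZ_3$-entry per subsequent transposition according to whether it extends the growing cyclic interval of \cite[Lemma 4.8]{YK} on the right or left), defines $\iota := \phi^{-1}\circ(-1)\circ\phi$ abstractly, and concludes via Corollary~\ref{cor:main}, i.e.\ Theorem~\ref{thm:main2_new} together with the same subgroup-enlargement observation you use. The only cosmetic difference is that the paper's intertwining carries the diagram automorphism, $\phi(\rho^{LF}(s_i)(w)) = \rho(s_{n-i})\phi(w)$, rather than your generator-for-generator matching --- harmless, since $s_i \mapsto s_{n-i}$ is a Coxeter automorphism of $\tC_n$, so well-definedness and multiplicity-freeness are unaffected.
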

	
	This proposition will be proved in Section~\ref{sec:combin-revisited.factorizations}.

	
	
	
	
	\subsubsection{
		Geometric caterpillars}\label{ss:trees}
	
	The $\tC_n$-action on the set $LF_{n+3}$ described in previous subsection
	has a geometric interpretation.
	
	\smallskip
	
	Consider a geometric tree of order $n$,
	whose vertices are drawn as points on a circle and labelled $0,1,\dots,n-1$ in clockwise order, 
	and whose edges are drawn as straight line segments which intersect only in common vertices.
	This tree is called a {\em geometric caterpillar} 
	if
	the set of internal vertices (non-leaves) forms a consecutive interval in $\ZZ_n$. This set is the {\em spine} of the caterpillar;
	the subgraph induced on it is a path.
	Denote by $GC_n$ the set of geometric caterpillars of order $n$. 
	
	The edges of a geometric tree (in particular, a geometric caterpillar) are ordered by the {\em Goulden-Yong (GY) partial order}:
	the set of edges with a common vertex $i$ is linearly ordered in an anti-clockwise (cyclically decreasing) order of the other vertex, starting with $i-1$, see~\cite{Goulden-Yong}.
	A geometric tree $T$ is a caterpillar if and only if the GY order on its edges is linear
	~\cite[Theorem 3.2]{YK2}. Interpreting the edges in $\Gamma\in GC_n$ as transpositions
	in $\S_n$, the linear GY order induces a bijection $\psi:GC_n\rightarrow LF_n$~\cite[Cor.\ 4.4]{YK}.
	
	\medskip

	For a caterpillar $\Gamma\in GC_{n+3}$, 
	let 
	$(e_0,\ldots,e_{n+1})$ be the sequence of edges of $\Gamma$,
	listed according to the GY order. 
	For example, for the geometric caterpillar $\Gamma$ in Figure~\ref{fig:caterpillar},
	$e_0=(1,8),\ e_1=(1,7),\ e_2=(1,6),\ e_3=(1,5),\ e_4=(1,2),\ e_5=(2,4),\ e_6=(2,3)$. 
	
	For every $0\le i\le n$, 
	the edges $e_i$ and $e_{i+1}$ have a common vertex;
	let $a_i$ and $b_i$ be the vertices of $e_i$ and $e_{i+1}$ which are not common. 
	Define $s_i\Gamma$ to be the geometric caterpillar whose set of edges is 
	obtained from the set of edges of $\Gamma$ by replacing
	either $e_i$ or $e_{i+1}$ by $(a,b)$  
	if the resulting tree is a geometric caterpillar; 
	and let $s_i\Gamma$ be $\Gamma$ otherwise. 
	One can verify that $s_i \Gamma$ is well defined.
	
	For example, consider $\Gamma$ in Figure~\ref{fig:caterpillar}.
	For $i=3$, $a=5$ and $b=2$. 
	Here $\Gamma\setminus e_4 \cup (2,5)$ is not a geometric caterpillar,
	but $\Gamma\setminus e_3 \cup (2,5)$ is;
	hence $s_3 \Gamma = \Gamma \setminus e_3 \cup (2,5)$.
	
	\medskip
	
	Define now, for each $0 \le i \le n$, a map
	$\rho^{GC}(s_i) : GC_{n+3} \to GC_{n+3}$ by
	\[
	\rho^{GC}(s_i)(\Gamma) := s_i\Gamma 
	\qquad (\forall\, \Gamma\in GC_{n+3}).
	\]
	The map $\rho^{GC}$ determines a well defined $\tC_n$-action on $GC_{n+3}$,
	which is isomorphic (via the bijection $\psi$) to the $\tC_n$-action on the set $LF_{n+3}$ described in the previous subsection.

	\begin{figure}[htb]
		\begin{center}
			\begin{tikzpicture}[scale=0.9]
			\draw (-1,1.7) node {$\Gamma=$};
			\fill (2.4,3.4) circle (0.1) node[above]{\tiny 1}; \fill (3.4,2.4)
			circle (0.1) node[right]{\tiny 2}; \fill (3.4,1) circle (0.1)
			node[right]{\tiny 3}; \fill (2.4,0) circle (0.1) node[below]{\tiny
				4}; \fill (1,0) circle (0.1) node[below]{\tiny 5}; \fill (0,1)
			circle (0.1) node[left]{\tiny 6}; \fill (0,2.4) circle (0.1)
			node[left]{\tiny 7}; \fill (1,3.4) circle (0.1) node[above]{\tiny
				8};
			
			\draw (1,3.4)--(2.4,3.4)--(3.4,2.4)--(3.4,1);
			
			\draw[red] (2.4,3.4)--(1,0);\draw (3.4,2.4)--(2.4,0); \draw
			(0,2.4)--(2.4,3.4);\draw[blue,dotted] (3.4,2.4)--(1,0);\draw
			(0,1)--(2.4,3.4);
			
			\draw (4.8,1.7) node {, \ $s_3 \Gamma=$};
			\fill (8.4,3.4) circle (0.1) node[above]{\tiny 1}; \fill (9.4,2.4)
			circle (0.1) node[right]{\tiny 2}; \fill (9.4,1) circle (0.1)
			node[right]{\tiny 3}; \fill (8.4,0) circle (0.1) node[below]{\tiny
				4}; \fill (7,0) circle (0.1) node[below]{\tiny 5}; \fill (6,1)
			circle (0.1) node[left]{\tiny 6}; \fill (6,2.4) circle (0.1)
			node[left]{\tiny 7}; \fill (7,3.4) circle (0.1) node[above]{\tiny
				8};
			
			\draw (7,3.4)--(8.4,3.4)--(9.4,2.4)--(9.4,1); \draw
			(9.4,2.4)--(8.4,0); \draw (6,2.4)--(8.4,3.4);\draw[red]
			(9.4,2.4)--(7,0); \draw (6,1)--(8.4,3.4);
			\end{tikzpicture}
		\end{center}
		\caption{$\tC_5$-action on $GC_8$.} 
		\label{fig:caterpillar}
	\end{figure}

	

	\section{Proto-Gelfand pairs and generalized flip actions}%
	\label{sec:Gelfand_gen}

	\subsection{Gelfand and proto-Gelfand pairs}%
	\label{sec:proto}\ 
	
	
	\begin{defn}
		Let $H\leq G$ be finite groups.
		The pair $(G,H)$ is called a {\em Gelfand pair} if
		the permutation representation of $G$ on the cosets of $H$ is multiplicity-free.
	\end{defn}
	
	We will use the following lemma, sometimes called the {\em Gelfand trick}.
	
	
	\begin{lemma}\label{t:Gelfand_new}%
		\cite[Theorem 45.2]{Bump}
		Let $H \le G$ be finite groups,
		and suppose that there exists an involutive anti-automorphism $\iota$ of $G$ 
		such that for every $g\in G$, $\iota(HgH) = HgH$.
		Then $(G,H)$ is a Gelfand pair.
	\end{lemma}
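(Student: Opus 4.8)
The plan is to reduce the statement to the commutativity of the Hecke algebra and then to exploit the anti-automorphism $\iota$ directly on that algebra. Concretely, I would work with the algebra $\mathcal{H}(G,H)$ of $H$-bi-invariant functions $f\colon G\to\CC$ under the convolution $(f_1*f_2)(g)=\sum_{h\in G}f_1(h)\,f_2(h^{-1}g)$. The first, purely representation-theoretic, step records the classical equivalence: the permutation module $\CC[G/H]$ is multiplicity-free if and only if its endomorphism algebra $\mathrm{End}_G(\CC[G/H])$ is commutative (a semisimple algebra $\bigoplus_i M_{m_i}(\CC)$ is commutative precisely when all $m_i\le 1$), together with the standard identification of $\mathrm{End}_G(\CC[G/H])$ with $\mathcal{H}(G,H)$ as an algebra — up to an anti-isomorphism, which does not affect commutativity. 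Thus it suffices to prove that $\mathcal{H}(G,H)$ is commutative.

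The second step translates the hypothesis. A function $f$ lies in $\mathcal{H}(G,H)$ exactly when it is constant on each double coset $HgH$; since $\iota$ fixes every such double coset setwise by assumption, every $f\in\mathcal{H}(G,H)$ satisfies $f\circ\iota=f$. In other words, $H$-bi-invariant functions are automatically $\iota$-invariant.

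The key (and essentially the only computational) step is to verify that $f\mapsto f\circ\iota$ is an \emph{anti}-automorphism of the convolution algebra of all functions on $G$. Using that $\iota$ is an involutive anti-automorphism of $G$ — so that $\iota(ab)=\iota(b)\iota(a)$, $\iota(g^{-1})=\iota(g)^{-1}$ and $\iota^2=\mathrm{id}$ — one substitutes $h=\iota(k)$ in the defining sum and applies the identity $\iota(k^{-1})\iota(g)=\iota(gk^{-1})$ to obtain $(f_1*f_2)\circ\iota=(f_2\circ\iota)*(f_1\circ\iota)$. I expect this change of variables to be the main, though routine, obstacle: one must track the reversal of the order of the factors carefully, while the rest is bookkeeping.

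Finally I would combine the pieces. For $f_1,f_2\in\mathcal{H}(G,H)$ the convolution $f_1*f_2$ is again $H$-bi-invariant (the Hecke algebra is closed under convolution), hence $\iota$-invariant by the second step; therefore $f_1*f_2=(f_1*f_2)\circ\iota=(f_2\circ\iota)*(f_1\circ\iota)=f_2*f_1$, where the last equality again uses the $\iota$-invariance of the two factors. Thus $\mathcal{H}(G,H)$ is commutative and, by the first step, $(G,H)$ is a Gelfand pair.
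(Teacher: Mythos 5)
Your proof is correct and takes essentially the same route as the paper's source: the paper does not prove this lemma itself but cites Bump (Theorem 45.2), whose argument is precisely yours — reduce multiplicity-freeness to commutativity of the Hecke algebra $\mathcal{H}(G,H)$ of $H$-bi-invariant functions under convolution, check that $f\mapsto f\circ\iota$ is an anti-automorphism of that algebra, and observe that the hypothesis $\iota(HgH)=HgH$ forces every bi-invariant function to be $\iota$-invariant, so $f_1*f_2=(f_1*f_2)\circ\iota=f_2*f_1$. All steps, including the change of variables in the convolution sum, check out.
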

	
	\begin{defn}
		Let $H\leq G$ be groups. We say that $(G,H)$ is a {\em proto-Gelfand pair} if, for every surjective homomorphism $\varphi$ of $G$ onto a finite group, $(\varphi(G),\varphi(H))$ is a Gelfand pair.
	\end{defn}
	
	\begin{corollary}\label{Gelfand:cor_new}
		If every 
		double coset of a subgroup $H$ of a group $G$ contains an involution, then $(G,H)$ is a proto-Gelfand pair.
	\end{corollary}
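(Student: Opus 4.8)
The plan is to apply the Gelfand trick (Lemma~\ref{t:Gelfand_new}) to each finite quotient, using inversion as the involutive anti-automorphism. First I would fix a surjective homomorphism $\varphi \colon G \to \overline{G}$ onto a finite group and set $\overline{H} := \varphi(H)$; by the definition of a proto-Gelfand pair it suffices to show that $(\overline{G},\overline{H})$ is a Gelfand pair. The anti-automorphism I would use is the inversion map $\iota(x) := x^{-1}$, which on any group is an involutive anti-automorphism, since $(xy)^{-1} = y^{-1}x^{-1}$ and $(x^{-1})^{-1} = x$. By Lemma~\ref{t:Gelfand_new} it then suffices to check that every double coset of $\overline{H}$ in $\overline{G}$ is fixed by $\iota$, i.e.\ is self-inverse.

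The key step is to transfer the hypothesis on double cosets from $G$ to the quotient. Since $\varphi$ is surjective, every element of $\overline{G}$ is $\varphi(g)$ for some $g \in G$, and because $\overline{H} = \varphi(H)$ we have $\overline{H}\,\varphi(g)\,\overline{H} = \varphi(HgH)$; thus every double coset of $\overline{H}$ is the image of a double coset of $H$. By hypothesis $HgH$ contains an element $t$ with $t^2 = 1$, so its image contains $\varphi(t)$, and $\varphi(t)^2 = \varphi(t^2) = 1$, whence $\varphi(t)^{-1} = \varphi(t)$.

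It remains to observe that a double coset containing an element equal to its own inverse is self-inverse. Writing $D = \overline{H}\,\varphi(t)\,\overline{H}$ and using $\overline{H}^{-1} = \overline{H}$ (as $\overline{H}$ is a subgroup) together with $\varphi(t)^{-1} = \varphi(t)$, one obtains $\iota(D) = D^{-1} = \overline{H}\,\varphi(t)^{-1}\,\overline{H} = \overline{H}\,\varphi(t)\,\overline{H} = D$. Hence every double coset of $\overline{H}$ in $\overline{G}$ is self-inverse, and Lemma~\ref{t:Gelfand_new} yields that $(\overline{G},\overline{H})$ is a Gelfand pair. As $\varphi$ was an arbitrary finite quotient of $G$, the pair $(G,H)$ is proto-Gelfand.

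There is essentially no serious obstacle here; the argument is a routine packaging of the Gelfand trick. The only point requiring a little care is that $\varphi(t)$ may collapse to the identity of $\overline{G}$, but this is harmless: we only need $\varphi(t)$ to equal its own inverse (self-invertibility, not order exactly two), and this holds automatically from $\varphi(t)^2 = 1$.
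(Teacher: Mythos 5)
Your proof is correct and follows essentially the same route as the paper's: both apply Lemma~\ref{t:Gelfand_new} in an arbitrary finite quotient with inversion as the anti-automorphism, noting that each double coset of $\varphi(H)$ is the image of a double coset of $H$ and hence contains the image of an involution, which makes it self-inverse. Your extra remark that $\varphi(t)$ may collapse to the identity is a harmless point the paper leaves implicit.
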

	
	
	\begin{proof}
		Suppose that $\varphi:G\rightarrow G_1$ is a homomorphism of $G$ onto a finite group $G_1$, 
		and let $H_1 := \varphi(H)$. 
		Take the inversion map $\iota(x) := x^{-1}$ $(\forall x \in G_1)$ as the anti-automorphism in Lemma~\ref{t:Gelfand_new}, 
		and pick images $\varphi(g)$ of involutions $g \in G$ as representatives of double cosets of $H_1$ in $G_1$. 
		Then, for every double coset, 
		\[
		\iota(H_1\varphi( g )H_1)= H_1\varphi(g^{-1})H_1=H_1\varphi(g)H_1.
		\]
		By Lemma~\ref{t:Gelfand_new}, $(G_1, H_1)$ is a Gelfand pair.
		
	\end{proof}

	\begin{corollary}\label{cor:free_action}
		Let $G$ be an infinite group acting transitively on a finite set $X$, and let $H$ be the stabilizer of some point of $X$. 
		If $(G,H)$ is a proto-Gefand pair, then the action of $G$ on $X$
		is multiplicity-free.
	\end{corollary}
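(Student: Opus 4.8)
The plan is to push the whole situation down to a single finite quotient of $G$ and to invoke the proto-Gelfand hypothesis there. Write $V := \CC[X]$ for the permutation module afforded by the action, fix a point $x_0 \in X$ with $\Stab_G(x_0) = H$, and set $N := \Ker(V) = \bigcap_{x \in X} \Stab_G(x)$. As recalled in the introduction, each $\Stab_G(x)$ has finite index in $G$ (equal to $|X|$, by transitivity), and $N$ is the intersection of the finitely many such subgroups, so $N$ has finite index in $G$; thus $\bar G := G/N$ is finite. Let $\varphi : G \to \bar G$ be the canonical projection.

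The key step is to recognize the $\bar G$-module $V$ as the coset module of $(\bar G, \varphi(H))$. First I would observe that $N \subseteq H$, since $N = \bigcap_{x} \Stab_G(x) \subseteq \Stab_G(x_0) = H$. Hence $\bar H := \varphi(H) = H/N$, and the canonical map $G/H \to \bar G/\bar H$ is an isomorphism of $\bar G$-sets. Since $N$ acts trivially on $X$ by construction, the $G$-action on $X \cong G/H$ factors through $\bar G$, and under the identification $X \cong G/H \cong \bar G/\bar H$ the permutation module $V$ is precisely $\CC[\bar G/\bar H]$ as a $\bar G$-module.

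Finally I would apply the proto-Gelfand hypothesis directly. Since $\varphi$ is a surjection of $G$ onto the finite group $\bar G$, the pair $(\varphi(G), \varphi(H)) = (\bar G, \bar H)$ is a Gelfand pair; that is, $\CC[\bar G/\bar H]$ is multiplicity-free as a $\bar G$-module. But this module is exactly $V$, and $\bar G = G/N = G/\Ker(V)$, so $V$ is multiplicity-free as a $G/\Ker(V)$-module, which is by definition what it means for the action of $G$ on $X$ to be multiplicity-free.

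The argument is essentially a bookkeeping exercise in quotients, so I do not expect a deep obstacle. The two points that must be handled with care are the finiteness of the index $[G:N]$, which is what guarantees that $\bar G$ is a legitimate finite quotient to which the proto-Gelfand hypothesis applies, and the containment $N \subseteq H$, which ensures that the coset space $\bar G/\bar H$ genuinely coincides with $X$ rather than with a proper further quotient of it.
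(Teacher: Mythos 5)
Your proof is correct and takes essentially the same route as the paper's own argument: both pass to the finite quotient by the kernel $K=\bigcap_{x\in X}\Stab_G(x)$ of the action, note that $K$ has finite index, and apply the proto-Gelfand hypothesis to the surjection $G\to G/K$ to conclude that the permutation module is multiplicity-free. Your extra bookkeeping (the containment $K\subseteq H$ and the identification $X\cong G/H\cong (G/K)/(H/K)$) simply makes explicit what the paper leaves implicit when it writes that the representation on $G/H$ factors through $G/K$ and that $(G/K,H/K)$ is a Gelfand pair.
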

	
	
	\begin{proof}
		The kernel $K$ of the $G$-action on $X$ is the intersection of the stabilizers of all points. Each stabilizer is of finite index in $G$, and therefore so is $K$.
		The 
		representation of $G$ on $G/H$ 
		factors through $G/K$.
		Since $(G,H)$ is proto-Gelfand, $(G/K,H/K)$ is a Gelfand pair, hence 
		the permutation representation of $G$ on $X$
		is multiplicity-free, as claimed.
		
	\end{proof}
	
	
	\subsection{A generalized flip action of $\tC_n$}%
	\label{sec:gen_flip_action}
	
	Let $G$ be an affine Weyl group of type $\tC_n$ ($n \ge 2$), with Coxeter generators $s_0, \ldots, s_n$.
	%
	%
	%
	
	The $G$-action of Definition~\ref{def:c-action_general_new} on the set 
	$\zzn$ is not transitive. It has trivial single-point orbits of the form $\{(0,\ldots,0,b)\}$ for every integer $b$. 
	Its other orbits are $\Omega_{n,0},\,\Omega_{n,1},\ldots,\Omega_{n,n-1}$, where 
	\begin{equation}\label{eq:omega_def}
	\Omega_{n,k} :=
	\{(a_1,\ldots,,a_n,b) \mid k=|\{1 \le i \le n\mid a_i=0\}| \} 
	\qquad (0 \le k \le n-1).
	\end{equation}
	
	The goal of this section is to show that the action of $G$ on each of these orbits is ``almost'' multiplicity-free. 
	The actions in Section~\ref{s:combinatorial_examples} are isomorphic to $G$-actions on quotients of $\Omega_{n,0}$.
	
	\medskip
	
	\begin{defn}
		For $0\leq k\leq n-1$ let $H_k\leq G$ 
		denote the stabilizer of 
		\[
		\omega_k := (0,\ldots,0,1,\ldots,1,0) \in \Omega_{n,k}.
		\]
	\end{defn}
	In Subsection~\ref{sec:stabilizer} we find generators for $H_k$ and describe its structure explicitly.
	In Subsection~\ref{sec:double_cover} we proceed to confirm that a certain double cover $K_k$ of $H_k$ is a proto-Gelfand subgroup of $G$.
	

	\subsection{Structure of the stabilizer}%
	\label{sec:stabilizer}
	
	Let $G$ be an affine Weyl group of type $\tC_n$.
	Recall from~\cite[Section 8.4]{BB} that $G$ can be described, combinatorially, as the group of odd $(2n+1)$-periodic bijections of $\ZZ$ onto itself. Explicitly, we identify $G$ with the group of all bijections $u:\ZZ \to \ZZ$ 
	satisfying
	\[
	u(-i)=-u(i) \quad (\forall i)\qquad \text{ and }\qquad
	u(i+2n+1)=u(i)+2n+1 \quad (\forall i).
	\]
	It follows that $u(0) = 0$, and that $u$ is determined by its values on the interval $[1,n]$. 
	We represent $u$ by the sequence $[u_1,u_2,\ldots,u_n]$, called the {\em window} of $u$, where $u_i := u(i)$.
	The simple reflections have the form
	\begin{equation*}
	s_i = 
	\begin{cases}
	[-1,2,3,\ldots,n],&\text{if } i=0;\\
	[1,\ldots,i-1,i+1,i,i+2,\ldots,n],&\text{if } 0<i<n;\\
	[1,\ldots,n-2,n-1,n+1],&\text{if } i=n.
	\end{cases} 
	\end{equation*}
	
	Now we describe explicitly the image of $\omega_k \in \zzn$ under $\rho(u)$ (see Definition~\ref{def:c-action_general_new}), 
	for an  arbitrary $u\in G$.
	
	\begin{defn}\label{def:sign_new}
		For $0 \le k \le n-1$, 
		the {\em $k$-sign} of an integer $t \in \ZZ$
		is
		\begin{equation*}
		\varepsilon_k(t) :=
		\begin{cases}
		1,&\text{if } t \equiv k+1, \ldots, n \pmod{2n+1};\\
		0,&\text{if } t\equiv -k, \ldots, k \pmod{2n+1};\\
		-1,&\text{if } t \equiv -n, \ldots, -(k+1) \pmod{2n+1}.
		\end{cases}
		\end{equation*}
	\end{defn}
	
	For any given $u \in G$ and any $t \not\equiv 0\pmod{2n+1}$, exactly one integer congruent to either $t$ or $-t$ $\pmod{2n+1}$ is in the window of $u$.
	We therefore have the following congruence for the sum of the elements in the window
	\[
	\sum_{j=1}^n \varepsilon_k(u_j)u_j \equiv \sum_{j=1}^n \varepsilon_k(j) j =\frac{(k+n+1)(n-k)}{2}
	\pmod{2n+1}.
	\]
	For $u\in G$ define 
	\[
	P_k(u):=
	\frac{\sum_{j=1}^n \varepsilon_k(j) j - \sum_{j=1}^n \varepsilon_k(u^{-1}(j))u^{-1}(j)}{2n+1} \in \ZZ.
	\]
	
	\begin{lemma}\label{lem:window_to_rho_new}
		For any $u \in G$, 
		the image of $\omega_k$ under $\rho(u)$ is the vector
		\[
		r_k(u) :=
		\left(\varepsilon_k(u^{-1}(1)),\ldots,\varepsilon_k(u^{-1}(n)),P_k(u)\right)\in\zzn.
		\]
	\end{lemma}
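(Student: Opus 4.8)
The plan is to exploit that $\rho$ is a homomorphism (Observation~\ref{obs:relations}) together with $G = \langle s_0, \ldots, s_n\rangle$, reducing the claim to a generator-by-generator check. Concretely, I will prove $\rho(u)(\omega_k) = r_k(u)$ by induction on the length of $u$ as a word in the simple reflections, building up on the \emph{left}: if the formula holds for $v \in G$, I verify it for $s_i v$ via $\rho(s_i v) = \rho(s_i)\rho(v)$, so it suffices to establish
\[
\rho(s_i)\bigl(r_k(v)\bigr) = r_k(s_i v) \qquad (0 \le i \le n,\ v \in G).
\]
The base case $u = \mathrm{id}$ is immediate: for $1 \le j \le n$ one has $\varepsilon_k(j) = 0$ when $j \le k$ and $\varepsilon_k(j) = 1$ when $j > k$, recovering the first $n$ coordinates of $\omega_k$, while $P_k(\mathrm{id}) = 0$.

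The computation rests on two elementary symmetries. First, $\varepsilon_k$ depends only on the residue modulo $2n+1$ and is odd, $\varepsilon_k(-t) = -\varepsilon_k(t)$. Second, every $w \in G$ satisfies $w(-t) = -w(t)$ and $w(t + (2n+1)) = w(t) + (2n+1)$. Since $(s_i v)^{-1} = v^{-1} s_i$, the $j$-th coordinate of $r_k(s_i v)$ equals $\varepsilon_k\bigl(v^{-1}(s_i(j))\bigr)$, so I only need to track how each generator permutes its window: $s_i$ (for $0 < i < n$) transposes $i$ and $i+1$; $s_0$ sends $1 \mapsto -1$ and fixes $2, \ldots, n$; and $s_n$ sends $n \mapsto n+1$ and fixes $1, \ldots, n-1$.

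For the first $n$ coordinates the verification is then direct. When $0 < i < n$, transposing $i \leftrightarrow i+1$ in the argument swaps coordinates $i$ and $i+1$ of $r_k(v)$, matching $\rho(s_i)$. For $s_0$, coordinate $1$ becomes $\varepsilon_k(v^{-1}(-1)) = -\varepsilon_k(v^{-1}(1))$ by the two oddness properties, so the first coordinate is negated, matching $\rho(s_0)$. For $s_n$, coordinate $n$ becomes $\varepsilon_k(v^{-1}(n+1))$; writing $n+1 = -n + (2n+1)$ and using periodicity and oddness gives $v^{-1}(n+1) = -v^{-1}(n) + (2n+1)$, whence $\varepsilon_k(v^{-1}(n+1)) = -\varepsilon_k(v^{-1}(n))$, so coordinate $n$ is negated, matching the $a_n \mapsto -a_n$ part of $\rho(s_n)$.

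It remains to check the last coordinate $P_k$, and here lies the only genuine point. For $0 < i < n$ and for $s_0$ the generators fix $b$, so I must show $\sum_{j=1}^n \varepsilon_k(v^{-1}(s_i(j)))\,v^{-1}(s_i(j)) = \sum_{j=1}^n \varepsilon_k(v^{-1}(j))\,v^{-1}(j)$: for $0 < i < n$ this is a reindexing of $\{1,\ldots,n\}$, and for $s_0$ the single altered term obeys $\varepsilon_k(v^{-1}(-1))\,v^{-1}(-1) = \varepsilon_k(v^{-1}(1))\,v^{-1}(1)$ since both factors change sign, so $P_k$ is unchanged. The delicate case — the real content of the lemma — is $s_n$, the sole generator altering $b$. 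Using $v^{-1}(n+1) = -v^{-1}(n) + (2n+1)$, the altered term contributes $\varepsilon_k(v^{-1}(n))\,v^{-1}(n) - \varepsilon_k(v^{-1}(n))(2n+1)$; dividing the $-(2n+1)$ discrepancy by $2n+1$ yields exactly $P_k(s_n v) = P_k(v) + \varepsilon_k(v^{-1}(n))$, which is precisely the rule $b \mapsto b + a_n$ of $\rho(s_n)$ with $a_n = \varepsilon_k(v^{-1}(n))$. Throughout, the integrality of $P_k$ is guaranteed by applying the displayed congruence preceding the statement to $u^{-1} \in G$, so that the numerator of $P_k(u)$ is divisible by $2n+1$.
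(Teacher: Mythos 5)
Your proof is correct and takes essentially the same approach as the paper's: induction on the length of a word in the simple reflections, with a generator-by-generator check exploiting the oddness and $(2n+1)$-periodicity of $\varepsilon_k$ and of elements of $G$, the only nontrivial case being $s_n$, where the identity $v^{-1}(n+1) = -v^{-1}(n)+(2n+1)$ yields both the sign flip in the $n$-th coordinate and the shift $P_k(s_n v) = P_k(v) + \varepsilon_k(v^{-1}(n))$. Your explicit justification of the integrality of $P_k$ via the displayed congruence applied to $u^{-1}$ is a detail the paper leaves implicit.
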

	
	\begin{proof}
		The proof is by induction on the Coxeter length of $u$.
		
		The claim is certainly true for the identity element $id_G = [1,2,\ldots,n]$ of $G$,
		since $r_k(id_G) = \omega_k$.
		It remains to show that if $\rho(u)(\omega_k) = r_k(u)$
		then $\rho(s_i u)(\omega_k) = r_k(s_i u)$
		for every $0 \le i \le n$. 
		
		For $0<i<n$ and $1\leq j\leq n$, $(s_i u)^{-1}(j) = u^{-1}(s_i(j))$ swapping the values $u^{-1}(i)$ and $u^{-1}(i+1)$. Clearly, $P_k(u)$ does not change. 
		Thus $r_k(s_i u)$ is obtained from $r_k(u)$ by swapping the $i$-th and $(i+1)$-st entries. 
		Also, $\rho(s_iu)(\omega_k)$ is obtained from $\rho(u)(\omega_k)$ by swapping the $i$-th and $(i+1)$-st entries. The claim thus holds.
		
		
		For $i = 0$, the first entry of $r_k(u)$ changes sign:
		\[
		\varepsilon_k((s_0 u)^{-1}(1)) 
		= \varepsilon_k(u^{-1}(-1)) 
		= -\varepsilon_k(u^{-1}(1)),
		\]
		as both $\varepsilon_k$ and $u^{-1}$ are odd functions. 
		On the other hand, $\varepsilon_k((s_0 u)^{-1}(j)) = \varepsilon_k(u^{-1}(j))$ for $2 \le j \le n$
		and also the last entry does not change:
		$P_k(s_0 u)=P_k(u)$, 
		since 
		\[
		\varepsilon_k((s_0 u)^{-1}(1)) \cdot (s_0 u)^{-1}(1)
		= -\varepsilon_k(u^{-1}(1)) \cdot (-u^{-1}(1)) 
		= \varepsilon_k(u^{-1}(1)) \cdot u^{-1}(1). 
		\]
		Indeed, the only difference between $\rho(s_0u)(\omega_k)$ and $\rho(u)(\omega_k)$ is the negation of the first entry.
		
		
		Finally, for $i=n$,
		$s_n(n) = n+1 = -n + (2n+1)$ so that
		$s_n^{-1}(n) = -n + (2n+1)$ and
		\[
		\varepsilon_k((s_n u)^{-1}(n)) 
		= \varepsilon_k(u^{-1}(-n + (2n+1))) 
		= -\varepsilon_k(u^{-1}(n)).
		\]
		As for the last entry,
		\begin{align*}
		(2n+1)(P_k(s_n u)-P_k(u))
		&=\varepsilon_k(u^{-1}(n))u^{-1}(n) - \varepsilon_k((s_n u)^{-1}(n))(s_n u)^{-1}(n)\\
		&=\varepsilon_k(u^{-1}(n))u^{-1}(n) + \varepsilon_k(u^{-1}(n))(2n+1-u^{-1}(n)) \\
		&= (2n+1)\varepsilon_k(u^{-1}(n)).
		\end{align*}
		Indeed, $\rho(s_n u)(\omega_k)$ is obtained from $\rho(u)(\omega_k)$ by negating the $n$-th entry and adding $\varepsilon_k(u^{-1}(n))$ to the last entry.
	\end{proof}

	\begin{defn}\label{def:exp}
		Let $m \in \ZZ$. If $m = a + (2n+1)b$ with $a \in [-n,n]$ and $b \in \ZZ$,
		define the {\em exponent} of $m$ to be $\lambda(m) := b$.
		We shall also employ the notation $a^{*b}=a+(2n+1)b$.
	\end{defn}
	For example, $n+1=(-n)^{*+1}$. The exponential notation emphasises the transport of structure without reference to $n$: for $u\in G$ we have $u(a^{*b})=u(a)^{*b}$. This is comes handy in the proof of Proposition~\ref{t:H_k}.
	
	As promised, we now describe the stabilizer $H_k = \Stab_G(\omega_k)$.
	Define the following elements of $G$, written as products of Coxeter generators and also using the window notation:
	\[
	\begin{array}{rlll}
	c
	&:= s_0s_1\cdots s_{n-1}s_n
	&= [2,3,\ldots,n-1,n,1^{*+1}];\\
	g_k 
	&:= c^{-1}s_1\cdots s_{k}s_{k+1}s_{k}\cdots s_1c
	&= [1,2,\ldots,k,n^{*-1},k+2,\ldots,n-1,(k+1)^{*+1}]&(0\leq k\leq n-2);\\
	h_k 
	&:= s_ks_{k+1}\cdots s_{n}\cdots s_{k+1}s_k
	&= [1,2,\ldots,k-1,(-k)^{*+1},k+1,\ldots,n]&(1\leq k\leq n-1).
	\end{array}
	\]
	
	
	
	
	
	
	
	\begin{proposition}\label{t:H_k}
		For $0 \le k \le n-1$,
		the stabilizer of $\omega_k$ under the action of $G$ is 
		\[
		H_k 
		= \langle s_0,\ldots,s_{k-1},h_k,g_k,s_{k+1},\ldots,s_{n-1}\rangle
		= H_k^L \times H_k^U,
		\]
		where 
		{\rm
			\[
			H_k^L := \langle s_0,\ldots,s_{k-1},h_k \rangle \cong \tC_k \text{ for } k>0 
			\text{ (while $H_0^L = 1$),}
			\]
		}
		and
		{\rm
			\[
			H_k^U := \langle g_k,s_{k+1},\ldots,s_{n-1} \rangle 
			\cong \tA_{n-k-1} \text{ for } k<n-1
			\text{ (while $H_{n-1}^U = 1$).}
			\]
		}
		Note that $\tC_1 \cong \tA_1 \cong \langle s_0,s_1 \,|\, s_0^2 = s_1^2 = 1 \rangle$, the infinite dihedral group.
		
		Explicitly, in window notation,
		\[
		H_k^L = 
		\{[a_1^{*b_1}, \ldots, a_k^{*b_k}, k+1, \ldots, n] \mid
		(|a_1|, \ldots, |a_k|) \in \S_k,\, 
		b_1, \ldots, b_k \in \ZZ\}
		\]
		while
		\[
		H_k^U = 
		\{[1, \ldots, k, a_{k+1}^{*b_{k+1}}, \ldots, a_n^{*b_n}] \mid
		(a_{k+1}-k, \ldots, a_n-k) \in \S_{n-k},\, b_{k+1}, \ldots, b_n \in \ZZ, b_{k+1} + \ldots + b_n = 0\}.
		\]
	\end{proposition}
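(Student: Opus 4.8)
The plan is to determine $H_k$ \emph{explicitly as a set of windows} by unwinding the stabilizer condition through Lemma~\ref{lem:window_to_rho_new}; this will yield at once both window descriptions and the decomposition $H_k=H_k^L\times H_k^U$. The isomorphism types will then follow by matching the listed generators against the standard window realizations of $\tC_k$ and $\tA_{n-k-1}$.

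First I would translate the condition $u\in H_k$. By Lemma~\ref{lem:window_to_rho_new} we have $u\in H_k$ iff $r_k(u)=\omega_k$, which imposes conditions on the window of $u^{-1}$; since $H_k=H_k^{-1}$ I may read these as conditions on the window of a generic element $v=[a_1^{*b_1},\ldots,a_n^{*b_n}]\in H_k$ (with $a_j\in[-n,n]$, $b_j\in\ZZ$, as in Definition~\ref{def:exp}). Using Definition~\ref{def:sign_new}, the equation $r_k(u)=\omega_k$ becomes three requirements: $\varepsilon_k(a_j)=0$ for $1\le j\le k$, i.e.\ $|a_j|\le k$; $\varepsilon_k(a_j)=1$ for $k<j\le n$, i.e.\ $k<a_j\le n$; and $P_k(u)=0$. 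Since $(|a_1|,\ldots,|a_n|)$ is always a permutation of $[1,n]$, the first two requirements force $(|a_1|,\ldots,|a_k|)\in\S_k$ and $(a_{k+1}-k,\ldots,a_n-k)\in\S_{n-k}$. For the third, only the indices $j>k$ contribute to $\sum_j\varepsilon_k(a_j^{*b_j})\,a_j^{*b_j}$, and there $\sum_{j>k}a_j=\sum_{m=k+1}^n m=\frac{(k+n+1)(n-k)}{2}=\sum_{j=1}^n\varepsilon_k(j)\,j$; hence $P_k(u)=0$ collapses to the single linear condition $b_{k+1}+\cdots+b_n=0$, while $b_1,\ldots,b_k$ remain free. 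This is exactly the asserted window description, and it exhibits $H_k$ as the internal direct product $H_k^L\times H_k^U$: the two families move disjoint sets of residue classes modulo $2n+1$ (namely $\pm1,\ldots,\pm k$ versus $\pm(k+1),\ldots,\pm n$) and fix the other block pointwise, so they commute and intersect trivially, while every window in $H_k$ factors uniquely as one window from each block.

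Next I would pin down the isomorphism types by comparing generators. On slots $1,\ldots,k$ the generators $s_0,\ldots,s_{k-1}$ act as the Coxeter generators of the hyperoctahedral group $B_k$ of signed permutations, and $h_k=[1,\ldots,k-1,(-k)^{*+1},k+1,\ldots,n]$ is a single boundary reflection; placing $h_k$ in the role of the last Coxeter generator, $\{s_0,\ldots,s_{k-1},h_k\}$ satisfies the braid relations of type $\tC_k$. Since $H_k^L\cong B_k\ltimes\ZZ^k$ (signed permutations with free windings, $B_k$ acting by signed permutation), the period-change map replacing $2n+1$ by $2k+1$ sends these generators to the Coxeter generators of the window realization of $\tC_k$ from Subsection~\ref{sec:stabilizer} and is a bijective homomorphism; thus $H_k^L\cong\tC_k$ and the listed generators do generate it. Symmetrically, $s_{k+1},\ldots,s_{n-1}$ generate the symmetric group $\S_{n-k}$ on slots $k+1,\ldots,n$, while $g_k=[1,\ldots,k,n^{*-1},k+2,\ldots,n-1,(k+1)^{*+1}]$ interchanges the two extreme residue classes $k+1$ and $n$ with opposite unit windings; after relabelling slots $k+1,\ldots,n$ as $1,\ldots,n-k$, this is precisely the affine reflection closing the type-$A$ path into a cycle, and the constraint $b_{k+1}+\cdots+b_n=0$ is the defining total-displacement-zero condition of the affine symmetric group. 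Hence $H_k^U\cong\tA_{n-k-1}$, with the degenerate cases $H_0^L=1$ and $H_{n-1}^U=1$ handled directly.

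The main obstacle, beyond the bookkeeping, is twofold: verifying that the compact words defining $g_k$ and $h_k$ really collapse to the stated single-active-slot windows (a short but error-prone computation facilitated by the identity $u(a^{*b})=u(a)^{*b}$ of Definition~\ref{def:exp}), and confirming \emph{generation}, i.e.\ that the listed reflections generate each entire factor rather than a proper subgroup. The latter is where the affine (rather than merely finite) nature is essential: one must recognise $g_k$ as the cycle-closing generator of $\tA_{n-k-1}$ and $h_k$ as the boundary generator of $\tC_k$, so that conjugating them by the finite parts $\S_{n-k}$ and $B_k$ sweeps out the full translation lattices. I would secure this cleanly via the explicit period-change isomorphisms above, which are surjective by construction and therefore automatically carry Coxeter generating sets to generating sets.
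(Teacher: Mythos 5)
Your proposal is correct and takes essentially the same route as the paper's proof: both unwind the stabilizer condition through Lemma~\ref{lem:window_to_rho_new} to obtain the window description with the single constraint $b_{k+1}+\cdots+b_n=0$, split $H_k$ as an internal direct product over the two disjoint residue blocks, and identify the factors with $\tC_k$ and $\tA_{n-k-1}$ via the same period-change transport-of-structure maps ($2n+1 \mapsto 2k+1$, respectively $n-k$ with the window-sum condition), which also settle generation by carrying Coxeter generating sets to the listed generators. The only cosmetic difference is that you pass from conditions on $u^{-1}$ to conditions on $u$ via $H_k=H_k^{-1}$, whereas the paper observes that $u^{-1}$, and hence $u$, permutes each of the sets $\ZZ_L$, $\ZZ_U$, $\ZZ_{-U}$ --- two equivalent pieces of bookkeeping.
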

	\begin{proof} 
		By Lemma~\ref{lem:window_to_rho_new}, the stabilizer $H_k$ consists of those $u\in G$ for which 
		\begin{equation}\label{eq:st1}
		\varepsilon_k(u^{-1}(i)) = 
		\begin{cases}
		0, &\text{if } 1\le i\le k;\\
		1, &\text{if } k<i\le n
		\end{cases}
		\end{equation}
		and 
		\[
		\sum_{j=1}^n \varepsilon_k(u^{-1}(j))u^{-1}(j)
		= \sum_{j=1}^n \varepsilon_k(j) j.
		\]
		Given equations~\eqref{eq:st1}, the latter equation is equivalent to
		\begin{equation}\label{eq:st2}
		\sum_{j=k+1}^{n} \lambda(u^{-1}(j)) = 0.
		\end{equation}
		
		Denote 
		\[
		L := \{-k,\ldots,-1\} \cup \{1,\ldots,k\} 
		\qquad \text{(signed lower set)} 
		\]
		and 
		\[
		U := \{k+1,\ldots,n\} 
		\qquad \text{(positive upper set)}.
		\]
		Using the additional notations
		$\ZZ_0 := (2n+1)\ZZ$,
		$\ZZ_L := L + (2n+1)\ZZ$, $\ZZ_U := U + (2n+1)\ZZ$, 
		and $\ZZ_{-U} := (-U) + (2n+1)\ZZ$,
		it is clear that
		$\ZZ = \ZZ_0 \cup \ZZ_L \cup \ZZ_U \cup \ZZ_{-U}$ 
		is a disjoint union.
		
		Now let $u \in H_k$.
		By equations~\eqref{eq:st1}, $u^{-1}$ maps each of the sets $\ZZ_L$, $\ZZ_U$, and $\ZZ_{-U}$ into (thus onto) itself, and of course fixes $\ZZ_0$ pointwise.
		The same thus holds for $u$.
		Define $u^L: \ZZ \to \ZZ$ and $u^U: \ZZ \to \ZZ$ by
		\[
		u^L(i) := 
		\begin{cases}
		u(i), &\text{if } i \in \ZZ_L;\\
		i, &\text{if } i \in \ZZ_0 \cup \ZZ_U \cup \ZZ_{-U}
		\end{cases}
		\]
		and
		\[
		u^U(i) := 
		\begin{cases}
		u(i), &\text{if } i \in \ZZ_U \cup \ZZ_{-U};\\
		i, &\text{if } i \in \ZZ_0 \cup \ZZ_L.
		\end{cases}
		\]
		Denote $H_k^L := \{u^L \mid u \in H_k\}$
		and $H_k^U := \{u^U \mid u \in H_k\}$.
		Then clearly $H_k^L$ and $H_k^U$ are subgroups of $H_k$, and in fact $H_k = H_k^L \times H_k^U$.
		It remains to determine the structure of $H_k^L$ and $H_k^U$.
		
		Equation~\eqref{eq:st2} is not relevant to the elements of $H_k^L$. By equations~\eqref{eq:st1}, the elements of $H_k^L$ are all the odd $(2n+1)$-periodic bijections of $\ZZ$ onto itself which fix $\ZZ_0 \cup \ZZ_U \cup \ZZ_{-U}$ pointwise.
		Identifying $\ZZ_0 \cup \ZZ_L$ with $\ZZ$ by mapping $a + (2n+1)b$ to $a + (2k+1)b$
		(for $-k \le a \le k, b \in \ZZ$), we can view $H_k^L$ as the group of all odd $(2k+1)$-periodic bijections of $\ZZ$ onto itself, namely (for $k \ge 1$) the affine Weyl group of type $\tC_k$; for $k = 0$ it is, of course, the trivial group.
		Using the window notation, restricted to $\{1, \ldots, k\}$,
		we can identify the Coxeter generators $s_0^\prime,\ldots,s_{k-1}^\prime,s_k^\prime$ of $\tC_k$ 
		with the generators $s_0,\ldots,s_{k-1},h_k$ of $H_k^L$. 
		Note that $s_k^\prime(k) = (-k)^{*+1} = -k+(2k+1) \in \ZZ$ is identified with $h_k(k) = (-k)^{*+1} = -k + (2n+1) \in \ZZ_L$.
		Thus $H_k^L = \langle s_0,\ldots,s_{k-1},h_k \rangle$
		for $1 \le k \le n-1$, as claimed.
		
		As for $H_k^U$: By equations~\eqref{eq:st1} and~\eqref{eq:st2}, the elements of $H_k^U$ are all the odd $(2n+1)$-periodic bijections $u$ of $\ZZ$ onto itself which fix $\ZZ_0 \cup \ZZ_L$ pointwise, map $\ZZ_U$ onto itself, and satisfy
		\[
		\sum_{j \in U} \lambda(u(j)) = 0.
		\]
		Identifying $\ZZ_U$ with $\ZZ$ by mapping $j = a + (2n+1)b$ to $i = a-k + (n-k)b$
		(for $k+1 \le a \le n, b \in \ZZ$), we can view $H_k^U$ as the group of all $(n-k)$-periodic,
		not necessarily odd,
		bijections $v$ of $\ZZ$ onto itself satisfying
		\[
		\sum_{i = 1}^{n-k} v(i) = \sum_{i = 1}^{n-k} i \, .
		\]
		Recalling from~\cite[Section 8.3]{BB} that $\tA_{n-1}$ can be described as the group of $n$-periodic bijections $u$ of $\ZZ$ onto itself satisfying
		\begin{equation}\label{window_sum}
		\sum_{i=1}^{n} u(i) = \sum_{i=1}^{n} i = \binom{n+1}{2},
		\end{equation}
		we deduce that
		for $k \le n-2$, this is exactly the affine Weyl group of type $\tA_{n-k-1}$.
		Using the window notation of $u$,
		restricted to $\{k+1, \ldots, n\}$ and shifted down by $k$,
		we can identify the Coxeter generators $s_0^\prime,s_1^\prime,\ldots,s_{n-k-1}^\prime$ of $\tA_{n-k-1}$ 
		with the generators $g_k,s_{k+1},\ldots,s_{n-1}$ of $H_k^U$. 
		For example, 
		$s_0^\prime = [(n-k)^{*-1},2,\ldots,n-k-1,1^{*+1}] \in \tA_{n-k-1}$ is identified with $g_k = [1,\ldots,k,n^{*-1},k+2,\ldots,n-1,(k+1)^{*+1}] \in H_k^U$.
		Thus $H_k^U = \langle g_{k+1},s_{k+1},\ldots,s_{n-1} \rangle$, which completes the proof.

	\end{proof}
	
	
	

	\subsection{
		A double cover}%
	\label{sec:double_cover}
	
	We continue with $G = \langle s_0,s_1,\ldots,s_n \rangle$, an affine Weyl group of type $\tC_n$. 
	For $1 \le i \le n$ define
	\begin{equation}\label{eq:translation}
	x_i
	:= s_{i-1} \cdots s_1 s_0 s_1 \cdots s_{i-1} s_i \cdots s_{n-1} s_n s_{n-1} \cdots s_i
	= [1,2,\ldots,i-1,i^{*+1},i+1,\ldots,n].
	\end{equation}
	These elements of $G$ generate
	\[
	X := \langle x_1,x_2,\ldots,x_n\rangle \cong \ZZ^n,
	\]
	the normal abelian subgroup of $G$ consisting of translations. 
	The group $G$ is a semidirect product $X \rtimes B$ 
	of the normal subgroup $X$ and
	the parabolic subgroup $B = \langle s_0,\ldots, s_{n-1} \rangle \leq G$, a finite Weyl group of type $B_n$\label{Bn-referenced-in-footnote}.
	In turn, $B$ is the semidirect product $V \rtimes S$
	of its parabolic subgroup $S = \langle s_1,\ldots, s_{n-1} \rangle$,
	isomorphic to the symmetric group $\S_n$,
	and the normal (in $B$) abelian subgroup $V = \langle e_1,e_2,\ldots,e_{n} \rangle \cong (\ZZ/2\ZZ)^n$, where 
	\[
	e_1 := s_0 = [-1, 2, \ldots, n]
	\]
	and, using the notation $g^h := h^{-1}gh$,
	\[
	e_{i+1} := e_{i}^{s_i} 
	= s_i \cdots s_1s_0s_1 \cdots s_i
	= [1, \ldots, i, -(i+1), i+2, \ldots, n] \qquad (i=1, \ldots, n-1). 
	\]
	The action of $S$ on $V$ is the permutation action:
	For $1 \le i \le n$ and $1 \le j \le n-1$,
	\begin{equation}
	e_i^{s_j} =
	\begin{cases}
	e_{i+1}& \text{if } i=j;\\
	e_{i-1}&\text{if } i=j+1;\\
	e_{i}&\text{otherwise.}
	\end{cases}
	\end{equation}
	
	We also have 
	\begin{equation}\label{x_to_e_new}
	x_i^{e_j} =
	\begin{cases}
	x_i^{-1}, &\text{ if } i=j \text; \\
	x_i, &\text{otherwise.}
	\end{cases}
	\end{equation}
	
	
	Recall that for $0\leq k\leq n-1$ we have 
	$H_k = \langle s_0,\ldots,s_{k-1}, h_k \rangle \times \langle g_k,s_{k+1},\ldots s_{n-1} \rangle \cong \tC_k \times \widetilde{A}_{n-k-1}$.
	The corresponding translation subgroups are 
	$\langle x_i \,:\, 1 \leq i \leq k \rangle$ and
	$\langle x_ix_{i+1}^{-1} \,:\, k+1 \leq i \leq n-1 \rangle$.
	%
	
	\begin{defn}\label{defn:T}
		For a subset $J \subseteq [n]$ of size $k$,
		let $L_J := [k] \setminus J$ 
		and $U_J := J \setminus [k]$. 
		Let $\tau_J \in \S_n$ be the involution exchanging the $i$-th smallest element of $L_J$ and the $i$-th smallest element of $U_J$, for all values of $i$. 
		Denote
		\[
		T:=\{\tau_J \mid J \subseteq [n],\, |J|=k\} \subseteq \S_n.
		\]
	\end{defn}
	
	For example, if $J=\{2,3,6,8\} \subseteq [9]$ (thus $n=9$ and $k=4$), then 
	$L_J = \{1,4\}$, $U_J = \{6,8\}$ and 
	$\tau_J = (1,6)(4,8)$. 
	Note that $\tau_J$ maps $[k]$ onto $J$, and vice versa.
	
	Consider the natural embeddings of $\S_n$ and of $\tC_k \times \tC_{n-k}$ into $\tC_n$, using the window notation.
	Indeed, as in Proposition~\ref{t:H_k}, the two factors are
	\[\begin{array}{l}
	\left\{[a_1^{*b_1}, \ldots, a_k^{*b_k}, k+1, \ldots, n] \mid
	(|a_1|, \ldots, |a_k|) \in \S_k,\, 
	b_1, \ldots, b_k \in \ZZ\right\}\text{ and}\\
	\left\{[1,2,\ldots, k, a_{k+1}^{*b_{k+1}}, \ldots, a_n^{*b_n}] \mid
	(|a_{k+1}|-k, \ldots, |a_n|-k) \in \S_{n-k},\, 
	b_{k+1}, \ldots, b_n \in \ZZ\right\}.
	\end{array}\]
	\begin{observation}\label{obs:transversal}
		The set $T$,
		consisting of $\binom{n}{k}$ involutions,
		is a transversal (i.e., a set of coset representatives) for $\tC_k\times \tC_{n-k}$ in $\tC_n$.
	\end{observation}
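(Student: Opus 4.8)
The plan is to realize $P:=\tC_k\times\tC_{n-k}$ as the subgroup of $\tC_n$ preserving a natural block decomposition of $\ZZ$, and then to read off the right coset of any element from a single combinatorial invariant. Set
\[
\ZZ_L:=\{\pm1,\ldots,\pm k\}+(2n+1)\ZZ,
\qquad
\ZZ_U:=\{\pm(k+1),\ldots,\pm n\}+(2n+1)\ZZ,
\]
so that $\ZZ=(2n+1)\ZZ\ \sqcup\ \ZZ_L\ \sqcup\ \ZZ_U$. First I would record, from the window descriptions of the two factors quoted before the Observation, that the first factor fixes $k+1,\ldots,n$ and permutes the residues $\{\pm1,\ldots,\pm k\}$, while the second fixes $1,\ldots,k$ and permutes the residues $\{\pm(k+1),\ldots,\pm n\}$ (both with arbitrary exponents). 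Since each of $\ZZ_L,\ZZ_U$ is closed under negation and under adding $2n+1$, this says precisely that $P=\{u\in\tC_n\mid u(\ZZ_L)=\ZZ_L,\ u(\ZZ_U)=\ZZ_U\}$: any such $u$ splits as the commuting product $u^Lu^U$ of its restrictions to $\ZZ_L$ and to $\ZZ_U$ (extended by the identity elsewhere), exactly as in the proof of Proposition~\ref{t:H_k}.

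Next I would introduce the invariant. For $u\in\tC_n$ define
\[
S(u):=\{\,i\in[n]\mid u(i)\in\ZZ_L\,\}.
\]
Because $u$ induces a signed permutation of residues, exactly $k$ of the values $u(1),\ldots,u(n)$ have absolute residue in $\{1,\ldots,k\}$, so $|S(u)|=k$ and $S$ maps $\tC_n$ into the set $\binom{[n]}{k}$ of $k$-subsets of $[n]$. One checks that $S$ is constant on right cosets of $P$ and separates them. If $p\in P$, then $(pu)(i)=p(u(i))$ lies in $\ZZ_L$ iff $u(i)$ does, since $p$ preserves $\ZZ_L$; hence $S(pu)=S(u)$. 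Conversely, suppose $S(u)=S(u')$ and put $p:=u'u^{-1}$. Oddness and $(2n+1)$-periodicity of $u,u'$, together with the closure properties of $\ZZ_L$, upgrade the equality of the finite sets $S(u)=S(u')$ to the assertion that $u(i)\in\ZZ_L\iff u'(i)\in\ZZ_L$ for \emph{every} $i\in\ZZ$ (and likewise for $\ZZ_U$). Writing $y=u(i)$ and using bijectivity, this forces $p(\ZZ_L)=\ZZ_L$ and $p(\ZZ_U)=\ZZ_U$, i.e.\ $p\in P$, so $Pu=Pu'$. Thus $S$ induces an injection of the right cosets $P\backslash\tC_n$ into $\binom{[n]}{k}$.

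Finally I would evaluate $S$ on $T$. Since $\tau_J\in\S_n\le\tC_n$ acts as a genuine permutation of $\{1,\ldots,n\}$ (no sign changes and no translations),
\[
S(\tau_J)=\{\,i\in[n]\mid\tau_J(i)\in[k]\,\}=\tau_J^{-1}([k])=\tau_J([k])=J,
\]
the last equality being the property of $\tau_J$ noted after Definition~\ref{defn:T}. Hence the $\binom{n}{k}$ involutions $\tau_J$ have pairwise distinct images $J$ under $S$, so they lie in pairwise distinct right cosets, and they realize every $k$-subset; therefore $S$ descends to a bijection $P\backslash\tC_n\to\binom{[n]}{k}$ and $T$ is a complete set of right-coset representatives. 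As each $\tau_J$ is an involution, $T=T^{-1}$ is simultaneously a left transversal, which gives the Observation in full. The only genuine work is the separation step: identifying $P$ with the block-preserving subgroup and verifying that equality of the finite invariant $S(u)=S(u')$ propagates, via periodicity and oddness, to true setwise preservation of $\ZZ_L,\ZZ_U$ by $u'u^{-1}$; the count and the evaluation on $T$ are then bookkeeping.
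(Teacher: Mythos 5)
Your proof is correct, and it follows exactly the route the paper implicitly intends: the Observation is stated without proof, immediately after the window descriptions that identify $\tC_k\times\tC_{n-k}$ as the subgroup preserving the blocks $\ZZ_L$ and $\ZZ_U$ (as in the proof of Proposition~\ref{t:H_k}) and after the remark that $\tau_J$ maps $[k]$ onto $J$, which are precisely the two ingredients you formalize via the coset invariant $S(u)=\{i\in[n]\mid u(i)\in\ZZ_L\}$. Your write-up supplies the omitted verification (constancy of $S$ on cosets, separation via oddness and $(2n+1)$-periodicity, and $S(\tau_J)=J$), including the correct final step that $T=T^{-1}$ makes $T$ a transversal on both sides, as needed for the factorization $G=T(\tC_k\times\tC_{n-k})$ used later.
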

	
	
	%
	
	\medskip
	
	Finally, define the following double cover of the stabilizer $H_k$. 
	
	\begin{defn}\label{defn:double_cover}
		Let
		\[
		v := 
		e_1e_2\cdots e_{n} = [-1,-2,\ldots,-n] \in V,
		\] 
		the longest element 
		in the natural embedding of $B_n$ in $\tC_{n}$,
		and let
		\[
		K_k := \langle H_k,v \rangle.
		\]
	\end{defn}
	Note that $|K_k:H_k|=2$. 

	\begin{theorem}\label{thm:main2_new}
		$(G,K_k)$ is a proto-Gelfand pair for every $0\leq k\leq n-1$.
	\end{theorem}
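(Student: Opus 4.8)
The plan is to apply Corollary~\ref{Gelfand:cor_new}: it suffices to prove that every double coset $K_k g K_k$ (with $g\in G$) contains an involution, since then each such double coset becomes self-inverse in any finite quotient and Gelfand's trick (Lemma~\ref{t:Gelfand_new}) applies. To locate involutions I would work inside the semidirect decomposition $G = X \rtimes B$ of Subsection~\ref{sec:double_cover}, writing a double-coset representative as $g = x\,w$ with $x\in X\cong\ZZ^n$ a translation and $w\in B\cong B_n$ a signed permutation. Since $X\cap B=1$, a direct computation gives $g^2 = x\,({}^w x)\,w^2$, so $g$ is an involution exactly when $w^2=1$ and ${}^w x = x^{-1}$; writing $x=t_\lambda$ and letting $w$ act on $\ZZ^n$ as a signed-permutation matrix, the latter reads $w\lambda=-\lambda$. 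Note that $K_k$ inherits this structure: $K_k = Y\rtimes P$, where $Y:=K_k\cap X = \langle x_i : 1\le i\le k\rangle \oplus \langle x_ix_{i+1}^{-1} : k+1\le i\le n-1\rangle$ is the translation lattice of $\tC_k\times\tA_{n-k-1}$ (Proposition~\ref{t:H_k}), of corank $1$ in $X$, and $P:=K_k\cap B$ is the finite signed-permutation part, generated by the signed permutations $B_k$ on the lower block $[k]$, by $\S_{\{k+1,\dots,n\}}$ on the upper block, and by the total negation $v$.

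First I would normalize the finite part $w$. Multiplying $g$ on both sides by elements of $P$ replaces $w$ by $p_1 w p_2$ (and $x$ by ${}^{p_1}x$), so $w$ may be reduced modulo the $P$-double-coset relation in $B_n$. By Observation~\ref{obs:transversal}, $T$ is a transversal of \emph{involutions} for $\S_{[k]}\times\S_{\{k+1,\dots,n\}}$ in $\S_n$; using this, the lower-block signs are absorbed by $B_k$, the upper-block permutation is sorted by $\S_{\{k+1,\dots,n\}}$, and the upper-block sign pattern is reduced modulo the total negation $v$. The outcome is a representative in which $w = \tau_J\,\epsilon$ is an involution: $\tau_J\in T$ matches the block split $[n]=[k]\sqcup\{k+1,\dots,n\}$ and $\epsilon$ records a residual involutive sign pattern on the upper block.

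It then remains to adjust the translation $x=t_\lambda$ so that $w\lambda=-\lambda$, using multiplication by $Y$ and by $v$. On coordinates where $w$ fixes a point positively the condition forces $\lambda_i=0$, achieved by subtracting an element of $Y$; on coordinates carrying a sign change or paired in a $2$-cycle the equation $w\lambda=-\lambda$ is consistent and again cut out by $Y$. The one genuinely new ingredient is the \emph{total upper-translation} direction $x_{k+1}\cdots x_n$, which is precisely the corank-$1$ defect absent from $Y$: within $H_k$ alone its sign cannot be reversed, which is exactly why certain $H_k$-double cosets contain no involution (Remark~\ref{rem:proto_HK}). The negation $v$ supplies this missing flip through ${}^v x_i = x_i^{-1}$, so that after conjugating by $v$ when needed the obstruction vanishes and $\lambda$ can be pushed into the $(-1)$-eigenlattice of $w$.

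The main obstacle is the coordinated bookkeeping in this last step: one must verify that, for every involution $w$ produced by the normalization, the adjustments available from $Y$ together with the single extra generator $v$ simultaneously solve $w\lambda=-\lambda$ in all coordinates, including the corank-$1$ total-upper-translation direction. Concretely this is a finite case analysis over the patterns of fixed points, sign changes and $2$-cycles of $w$ relative to the partition $[n]=[k]\sqcup\{k+1,\dots,n\}$, checking in each case that $v$ compensates for the one direction missing from $Y$. I expect the extremes $k=0$ and $k=n-1$ (where one factor of $H_k$ is trivial) to serve as base cases, and the general split to reduce to handling the $\tC_k$ and $\tA_{n-k-1}$ blocks independently, their interaction being mediated only through $\tau_J$ and $v$.
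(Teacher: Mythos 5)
Your proposal is essentially the paper's own proof: both apply Corollary~\ref{Gelfand:cor_new} and hunt involutions via the decomposition $G = X \rtimes B$, the involutive transversal $T$, and the extra generator $v$ supplying the inversion of the one translation direction missing from the lattice of $H_k$ (one small imprecision: it is multiplication of the finite part by the central element $v$, replacing $w$ by $wv$, that flips the $(-1)$-eigenlattice --- conjugating $t_\lambda w$ by $v$ merely negates $\lambda$ and leaves the condition $w\lambda=-\lambda$ unchanged). The ``coordinated bookkeeping'' you defer at the end is precisely what the paper carries out, and it collapses to two short cases: the decomposition $G = T\langle x_n\rangle V K_k$ reduces the translation part to a single power $x_n^d$, a uniform choice of $w' \in V \cap K_k$ makes $ww'$ commute with $\tau$ and satisfy $x_n^{ww'}=x_n^{-1}$, and then $\tau x_n^d ww' x_{\tau(n)}^{-d}$ (when $\tau(n)\ne n$, using $x_{\tau(n)}\in H_k$) or $\tau x_n^d ww'$ (when $\tau(n)=n$) is the desired involution, confirming that your case analysis does close.
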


	\begin{proof}
		By Corollary~\ref{Gelfand:cor_new}, it suffices to show that every double coset of $K_k$ in $G$ contains an involution. A fortiori, it suffices to show this for every left coset $gK_k$.
		
		
		Denote
		\[
		V_L := \langle e_1, \ldots, e_k \rangle
		\qquad \text{and} \qquad
		V_U := \langle e_{k+1}, \ldots, e_n \rangle,
		\]
		so that
		\[
		V = V_L \times V_U.
		\]
		Consider the natural embeddings $\tA_{n-k-1} \le \tC_{n-k}$ and $\tC_k \times \tC_{n-k} \le \tC_n$.
		Note that $V_L\le \tC_k$ and $V_U \le \tC_{n-k}$, and also
		$\tC_k = V_L \tC_k$ (trivially) and
		$\tC_{n-k} = \langle x_n\rangle V_U \tA_{n-k-1}$.
		Combining 
		this with Observation~\ref{obs:transversal} and Proposition~\ref{t:H_k}, 
		one deduces that  
		\[
		\begin{aligned}
		G 
		= \tC_n 
		&= T (\tC_k \times \tC_{n-k})
		= T (V_L \tC_k \times \langle x_n \rangle V_U \tA_{n-k-1})
		= T \langle x_n \rangle V (\tC_k\times \tA_{n-k-1}) \\
		&= T \langle x_n \rangle V H_k
		= T \langle x_n \rangle V \langle v \rangle H_k= T\langle x_n\rangle V K_k.
		\end{aligned}
		\]
		Thus every left coset of $K_k$ in $G$ is of the form
		$\tau x_n^d w K_k$ for some $\tau \in T$, $w \in V$ and integer $d$.
		Note that $V \cap H_k = V_L$ and $V \cap K_k = V_L \langle v \rangle$.
		In the expression of $\tau$ as a product of disjoint cycles, each $2$-cycle contains one element of $[k]$ and one element of $[n] \setminus [k]$. It follows that there exists an element $w' \in V \cap K_k$ such that
		$ww'$ commutes with $\tau$ and $x_n^{ww'} = x_n^{-1}$.
		
		If $\tau(n) \ne n$ then $\tau(n)\le k$, hence
		$x_{\tau(n)}\in H_k$. 
		The coset representative $\tau x_n^d w w^\prime x_{\tau(n)}^{-d} \in \tau x_n^d w K_k$ is then an involution:
		\[
		(\tau  x_n^d ww' x_{\tau(n)}^{-d})^2
		= x_{\tau(n)}^d ww' x_n^{-d}x_n^{d}ww' x_{\tau(n)}^{-d}
		= 1.
		\]
		On the other hand, if $\tau(n) = n$ then 
		$\tau x_n^d ww' \in \tau  x_n^d w K_k$ is an involution:
		\[
		(\tau x_{n}^d ww')^2
		= x_{n}^d ww' x_n^{d}ww'
		= x_{n}^d(x_{n}^d)^{ww'}
		= 1.
		\]
		
	\end{proof}
	
	An explicit description
	of the involutive $K_k$-coset representatives is provided in the following remark. This description may help the reader
	to follow the  proof of Theorem~\ref{thm:main2_new} above.

	\begin{remark}\label{rem:explicit}
		One can apply the window notation 
		to get an explicit  description of a complete list of involutive $K_k$ left-coset representatives.
		Fix $0 \le k \le n-1$. Then
		\[
		T :=
		\{(i_1,j_1)\cdots (i_t,j_t) \mid 
		t \ge 0,\, 1\le i_1 < \cdots <i_t \le k \text{ and } k+1 \le j_1 < \cdots < j_t \le n\}.
		\]
		
		\smallskip
		
		The complete list of involutive $K_k$-coset representatives is a disjoint union of subsets indexed by $\tau\in T$:
		\[
		R_k := \bigsqcup_{\tau\in T} R_{k,\tau}
		\]
		where, for every $\tau=(i_1,j_1)\cdots (i_t,j_t) \in T$,
		$R_{k,\tau}$ is the set of affine permutations  $\sigma=[\sigma(1),\ldots,\sigma(n)]$, in window notation, satisfying: 
		\begin{itemize}
			\item
			$\sigma(i)=i$  for all $ i\in [k]\setminus\{i_1,\dots,i_t\}$;
			\item
			$\sigma(i)=\pm i$ for all $k<i\not\in \{j_1,\dots,j_t,n\}$;
			\item
			$\sigma(i_r)=\pm j_r$ and $\sigma(j_r)=\pm i_r$ for all $1\le r\le t$,
			with ${\rm sign}(\sigma(i_r))={\rm sign}(\sigma(j_r))$; 
			\item
			if $\tau(n)=n$, namely $j_t<n$, then 
			$\sigma(n)=(-n)^{*d}$
			for some $d\in \ZZ$;
			and if $\tau(n) \ne n$, namely $j_t=n$, then 
			$\sigma(n)=i_t^{*d}$ and $\sigma(i_t)=n^{*-d}$ for some $d\in \ZZ$.
			
		\end{itemize}

		
		For example, if $k=0$ then $T=\{id\}$, and the complete list of involutive $K_k$-coset representatives is
		\[
		R_0 = R_{0,id} 
		= \{[\pm 1, \pm 2, \dots, \pm (n-1), (-n)^{*d}] \mid d \in \ZZ\}.
		\]
		If $k=1$ then $T = \{id\} \sqcup \{(1,j) \mid 1 \le j \le n\}$,
		and the complete list of involutive $K_k$-coset representatives is the disjoint union 
		\[
		R_1 = \bigsqcup_{\tau\in T} R_{1,\tau}
		= R_{1,id} \sqcup \bigsqcup_{1 < j \le n} R_{1,(1,j)},
		\]
		where
		\[
		R_{1,id} := 
		\{[1, \pm 2, \dots, \pm (n-1), (-n)^{*d}] \mid d \in \ZZ\},
		\]
		\[
		R_{1,(1,j)} := 
		\{[\epsilon \cdot j, \pm 2, \dots, \pm (j-1), \epsilon \cdot 1, \pm (j+1),\dots,  \pm (n-1),(-n)^{*d}] \mid \epsilon \in \{-1,1\},\, d \in \ZZ \}\qquad (1 < j < n),
		\]
		and
		\[
		R_{1,(1,n)} :=
		\{[n^{*-d}, \pm 2, \dots, \pm (n-1), 1^{*d}]\mid d \in \ZZ \}.
		\]
	\end{remark}

	\begin{remark}\label{rem:proto_HK}
		In contrast to Theorem~\ref{thm:main2_new},
		$(G,H_k)$ is not a proto-Gelfand pair. 
		In fact, 
		$H_k$ is not even a proto-Gelfand subgroup of $\langle v \rangle \langle x_n \rangle  H_k$, 
		since the quotient $\langle v \rangle \langle x_n \rangle H_k / \langle x_n^3 \rangle H_k$ is isomorphic to $\S_3$, whose regular character is not multiplicity-free. 
	\end{remark}

	\section{Proto-Gelfand pairs for type $\tB_n$}\label{sec:tB}
	
	Results for types $\tB_n$ follow quite easily from arguments similar to those used above for type $\tC_n$.
	
	
	
	
	Let $G$ be an affine Weyl group of type $\tC_n$, as in Subsection~\ref{sec:stabilizer}.
	Using Definition~\ref{def:exp},
	let $G_1$ consist of those $u = [u_1, \ldots, u_n] \in G$
	for which the sum of exponents in the window is even:
	\[
	\sum_{i=1}^{n} \lambda(u_i) \equiv 0 \pmod 2.
	\]
	$G_1$ is an index $2$ subgroup of $G$
	and, by~\cite[Section~8.5]{BB}, 
	it is an affine Weyl group of type $\tB_n$.
	
	\begin{remark}
		Formally, \cite[Section~8.5]{BB} considers the set of elements $u \in G$ satisfying
		\[
		\# \{j \le n \mid u_j \ge n+1\} \equiv 0 \pmod 2.
		\]
		However, it is not difficult to see that this description is equivalent to the above definition of $G_1$. 
		Indeed, for any $1 \le i \le n$,
		if $\lambda(u_i) = k  \ge 0$ then $j = i-(2n+1)b$ (for $b \in \ZZ$)
		satisfies both $j \le n$ and $u_j \ge n+1$ if and only if $0 \le b < k$, while $j = -i-(2n+1)b$ never satisfies both inequalities simultaneously. 
		If $\lambda(u_i) = -k < 0$ then $\lambda(u_{-i}) = k > 0$, leading to analogous conclusions.
		Overall we conclude that
		\[
		\# \{j \le n \mid u_j \ge n+1\} = \sum_{i=1}^{n} |\lambda(u_i)|,
		\]
		and this number is even if and only if 
		$\sum_{i=1}^{n} \lambda(u_i)$ is.
	\end{remark}
	
	Restrict the 
	$G$-action $\rho$,
	from Definition~\ref{def:c-action_general_new}, to $G_1$.
	Using the notations of Section~\ref{sec:Gelfand_gen},
	for any $0 \le k \le n-1$,
	the stabilizer of $\omega_k = (0,\ldots,0,1,\ldots,1,0)$ under the restricted action is $H_k \cap G_1$. 
	Noting that $v := e_1e_2\cdots e_{n} = [-1,-2,\ldots,-n] \in G_1$,
	denote 
	$M_k := K_k \cap G_1 = \langle H_k,v \rangle \cap G_1$.
	The proof of the following result is very similar to that of Theorem~\ref{thm:main2_new}.
	
	
	\begin{theorem}\label{thm:main2_B}
		$(G_1,M_k)$ is a proto-Gelfand pair for every $0\leq k\leq n-1$.
	\end{theorem}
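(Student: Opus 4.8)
The plan is to mirror the proof of Theorem~\ref{thm:main2_new}, reusing the decomposition $G = T \langle x_n \rangle V K_k$ and the explicit involutive coset representatives constructed there, and then to correct them for parity so that they fall inside $G_1$. By Corollary~\ref{Gelfand:cor_new} applied to the pair $(G_1, M_k)$, it suffices to produce an involution in each left coset $g M_k$ with $g \in G_1$. Since $v \in G_1$ we have $M_k = K_k \cap G_1$ with $[K_k : M_k] \le 2$, and for $g \in G_1$ one checks $g M_k = g K_k \cap G_1$; so it is enough to show that every $K_k$-coset meeting $G_1$ contains an involution that itself lies in $G_1$. The bookkeeping is governed by the parity homomorphism $\theta : G \to \ZZ/2\ZZ$, $\theta(u) = \sum_{i=1}^n \lambda(u_i) \bmod 2$ (Definition~\ref{def:exp}), whose kernel is exactly $G_1$; note that $T$, $V$, and the finite parabolic subgroup $B$ lie in $\ker\theta$, while each translation $x_i$ is odd.

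First I would re-examine, modulo $\theta$, the two families of involutions of Theorem~\ref{thm:main2_new}. A left coset has the form $\tau x_n^d w K_k$. When $\tau(n) \ne n$ the representative is $\tau x_n^d w w' x_{\tau(n)}^{-d}$; its two translation factors $x_n^{d}$ and $x_{\tau(n)}^{-d}$ have opposite exponent sums, so $\theta$ of the whole element is $0$ and it already lies in $G_1$. When $\tau(n) = n$ the representative is $\tau x_n^d w w'$, of parity $\theta = d \bmod 2$, so for even $d$ it again lies in $G_1$. Hence the only case demanding a new argument is $\tau(n) = n$ with $d$ odd. For $k = 0$ this is vacuous, since $K_0 \le G_1$ forces such cosets to be entirely odd and thus disjoint from $G_1$; so we may assume $k \ge 1$.

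The main obstacle is exactly this case. Writing $\sigma_0 := \tau x_n^d w w'$ for the odd involution of Theorem~\ref{thm:main2_new}, I would look for an odd element $\kappa \in K_k$ inverted by $\sigma_0$, i.e.\ $\sigma_0^{-1} \kappa \sigma_0 = \kappa^{-1}$; then $\sigma_0 \kappa$ is the desired involution in $g K_k \cap G_1 = g M_k$, because $(\sigma_0 \kappa)^2 = \sigma_0^2 (\sigma_0^{-1} \kappa \sigma_0) \kappa = \kappa^{-1} \kappa = 1$ while $\theta(\sigma_0 \kappa) = 0$. Conjugation by $\sigma_0$ acts on the translation lattice $X \cong \ZZ^n$ through the linear part of $\sigma_0$, and a translation is inverted by $\sigma_0$ precisely when it lies in the $(-1)$-eigenlattice of that linear part. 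Since $\sigma_0$ flips coordinate $n$, this eigenlattice always contains the direction of $x_n$; combined with the antidiagonal directions $x_i x_{\tau(i)}^{-1}$ supplied by the transpositions of $\tau$ (or, when $\tau = \mathrm{id}$, with a lower direction $x_i$, $i \le k$, obtained after also flipping the sign of coordinate $i$), one can arrange that it meets the odd part of the translation lattice of $K_k$ described in Proposition~\ref{t:H_k}, yielding the required $\kappa$. The delicate point is that the sign component $w w'$ of $\sigma_0$ must first be adjusted within $V \cap K_k = V_L \langle v \rangle$ so that the eigenlattice really does contain an odd vector of the $K_k$-lattice; this is where the slack left over from the type-$\tC_n$ construction is spent, and it is the step I expect to require the most care.

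Collecting the cases, every $K_k$-coset meeting $G_1$ contains an involution inside $G_1$, so every left (hence every double) coset of $M_k$ in $G_1$ contains an involution, and Corollary~\ref{Gelfand:cor_new} gives that $(G_1, M_k)$ is a proto-Gelfand pair. The steps I am leaving as routine are the identity $g M_k = g K_k \cap G_1$, the membership of the various $\kappa$ in $K_k$ via Proposition~\ref{t:H_k}, and the parity computations for $T$, $V$, and the $x_i$; the one genuinely new ingredient beyond the type-$\tC_n$ argument is the eigenlattice correction above, which is precisely where the index-two passage from $\tC_n$ to $\tB_n$ enters.
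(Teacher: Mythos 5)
Your proposal is correct, and it manufactures essentially the same involutive coset representatives as the paper, but the route is organized genuinely differently. The paper re-decomposes $G_1$ itself: from $\left( \tC_k \times \tC_{n-k} \right) \cap \tB_n = \{1, x_1 x_n\} \langle x_n^2 \rangle V \left( \tB_k \times \tA_{n-k-1} \right)$ it obtains $G_1 = T \{1, x_1 x_n\} \langle x_n^2 \rangle V M_k$, so every left $M_k$-coset has the form $\tau x_n^{2d} w M_k$ or $\tau x_1 x_n^{2d+1} w M_k$, and it then writes an explicit involution in each of four cases; in its last case ($\tau(n)=n$, $\tau(1)=1$, odd power) it redefines $w'(1)$ so that $x_1^{ww'}=x_1^{-1}$ --- which is precisely your eigenlattice correction with $\kappa = x_1^{\pm 1}$. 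You instead keep the type-$\tC_n$ decomposition $G = T\langle x_n\rangle V K_k$, reduce via $gM_k = gK_k \cap G_1$ (valid, since $v \in G_1$ gives $M_k = K_k \cap G_1$), and use the parity homomorphism $\theta$ to see that the representatives from Theorem~\ref{thm:main2_new} are already even except when $\tau(n)=n$ and $d$ is odd (vacuous for $k=0$, as you correctly note). Your correction in that case is sound and can be made fully explicit: for $\tau \ne \mathrm{id}$ pick a $2$-cycle $(i,j)$ of $\tau$ (automatically $j \ne n$ here), on which $ww'$ carries a common sign $\epsilon_i=\epsilon_j=\epsilon$ by $\tau$-invariance, and take $\kappa = x_i x_j^{-1} x_n$ if $\epsilon = +1$, or $\kappa = x_i x_j x_n^{-1}$ if $\epsilon = -1$; both are odd, lie in the translation lattice of $H_k$ from Proposition~\ref{t:H_k} (a lower generator $x_i$ times an upper zero-sum translation), and are inverted by $\sigma_0$ since $e_i \mp e_j \pm e_n$ lies in the $(-1)$-eigenlattice. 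One imprecision in your sketch: a $2$-cycle contributes $e_i - e_j$ to that eigenlattice only when $\epsilon = +1$ (it contributes $e_i + e_j$ when $\epsilon = -1$), and you cannot in general renormalize $\epsilon$ inside $V \cap K_k = V_L\langle v\rangle$ without destroying $x_n^{ww'} = x_n^{-1}$; the fix is simply to switch between the two choices of $\kappa$ above, so this is exactly the wrinkle you flagged, not a gap. As for what each route buys: the paper's explicit $\tB_n$-decomposition directly yields the complete window-notation list of representatives recorded in the remark following Theorem~\ref{thm:main2_B}, whereas your version is more economical, avoids computing the intersection with $\tB_n$ altogether, and isolates transparently the single parity case where the index-two passage from $\tC_n$ to $\tB_n$ costs anything.
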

	
	\begin{proof}
		By Proposition~\ref{t:H_k}, $H_k = H_k^L \times H_k^U$ where
		\[
		H_k^L = 
		\{[a_1^{*b_1}, \ldots, a_k^{*b_k}, k+1, \ldots, n] \mid
		(|a_1|, \ldots, |a_k|) \in S_k,\, 
		b_1, \ldots, b_k \in \ZZ\}
		\]
		and
		\[
		H_k^U = 
		\{[1, \ldots, k, a_{k+1}^{*b_{k+1}}, \ldots, a_n^{*b_n}] \mid
		(a_{k+1}-k, \ldots, a_n-k) \in S_{n-k},\, 
		b_{k+1}, \ldots, b_n \in \ZZ,\,
		b_{k+1} + \ldots + b_n = 0\}.
		\]
		Clearly $H_k^U \le G_1$ for any $k$, 
		and also $H_0^L = 1 \le G_1$. 
		Thus $H_0 = H_0^U \le G_1$ and $K_0 = \langle H_0,v \rangle \le G_1$. 
		Since $M_0 = K_0$ is a proto-Gelfand subgroup of $G$, it is also a proto-Gelfand subgroup of $G_1$.
		
		From now on assume that $1 \le k \le n-1$.
		$H_k^U \le G_1$ is an affine Weyl group of type $\tA_{n-k-1}$, while
		\[
		H_k^L \cap G_1 = 
		\{[a_1^{*b_1}, \ldots, a_k^{*b_k}, k+1, \ldots, n] \mid
		(|a_1|, \ldots, |a_k|) \in S_k,\, 
		b_1, \ldots, b_k \in \ZZ,\,
		b_1 + \ldots + b_k 
		\text{ is even}\}
		\]
		is an affine Weyl group of type $\tB_{k}$.
		%
		By the Gelfand trick, it suffices to find an involution in each left $M_k$-coset in $G_1$. 
		
		Using the natural embeddings $\tB_k \le \tC_k$ and $\tA_{n-k-1} \le \tB_{n-k} \le \tC_{n-k}$, the parity condition of type $\tB$ implies
		\[
		\begin{aligned}
		\left( \tC_k \times \tC_{n-k} \right) \cap \tB_n 
		&= \left( \tB_k \times \tB_{n-k} \right) \sqcup \left( x_1\tB_k \times x_n\tB_{n-k} \right) \\
		&= \left( V_L\tB_k \times \langle x_n^2 \rangle V_U\tA_{n-k-1} \right) \sqcup \left( x_1 V_L\tB_k \times x_n \langle x_n^2 \rangle V_U\tA_{n-k-1} \right) \\
		&= \{1, x_1 x_n\} \langle x_n^2 \rangle V \left( \tB_k \times \tA_{n-k-1} \right).
		\end{aligned}
		\]
		By Observation~\ref{obs:transversal}, $\tC_n = T \left( \tC_k \times \tC_{n-k} \right)$ where $T \subseteq G_1$ is the set of representatives from Definition~\ref{defn:T}.
		Hence
		\[
		\begin{aligned}
		G_1 = \tB_n
		&= T \left[ \left( \tC_k \times \tC_{n-k} \right) \cap \tB_n \right]
		= T \{1, x_1 x_n\} \langle x_n^2 \rangle V \left( \tB_k \times \tA_{n-k-1} \right) \\
		&= T \{1, x_1 x_n\} \langle x_n^2 \rangle V \left( H_k \cap G_1 \right)
		= T \{1, x_1 x_n\} \langle x_n^2 \rangle V M_k.
		\end{aligned}
		\]
		Thus every left coset of $M_k$ in $G_1$ is of the form
		$\tau x_n^{2d} w M_k$ or $\tau x_1x_n^{2d+1} w M_k$ for some $\tau \in T$, $w \in V$ and integer $d$.
		Note that $V \cap H_k = V_L \le G_1$ and thus $V \cap M_k = V_L \langle v \rangle$.
		In the expression of $\tau$ as a product of disjoint cycles, each $2$-cycle contains one element of $[k]$ and one element of $[n] \setminus [k]$. It follows that there exists an element $w' \in V \cap M_k$ such that
		$ww'$ commutes with $\tau$ and $x_n^{ww'} = x_n^{-1}$.
		
		If $\tau(n) \ne n$ then $\tau(n)\le k$, hence
		$x_{\tau(n)}^{-2d}, (x_1^{-1})^{ww'} x_{\tau(n)}^{-(2d+1)} \in M_k$. 
		Thus the coset representatives 
		\[
		\tau x_n^{2d} ww' x_{\tau(n)}^{-2d} 
		\in \tau x_n^{2d} w M_k
		\]
		and 
		\[
		\tau x_n^{2d+1} ww' x_{\tau(n)}^{-(2d+1)} 
		= \tau x_1x_n^{2d+1}  ww' (x_1^{-1})^{ww'} x_{\tau(n)}^{-(2d+1)}
		\in  \tau x_1x_n^{2d+1} w M_k
		\] 
		are involutions:
		\[
		\left( \tau  x_n^{2d} ww' x_{\tau(n)}^{-2d} \right)^2
		= x_{\tau(n)}^{2d} ww' x_n^{-2d}x_n^{2d}ww' x_{\tau(n)}^{-2d}
		= 1
		\]
		and similarly
		\[
		\left( \tau x_n^{2d+1} ww' x_{\tau(n)}^{-(2d+1)} \right)^2
		= x_{\tau(n)}^{2d+1} ww' x_n^{-(2d+1)}x_n^{2d+1} ww' x_{\tau(n)}^{-(2d+1)}
		= 1.
		\]
		
		Now, assume that $\tau(n) = n$. Then
		\[
		\tau x_n^{2d} ww' \in \tau  x_n^{2d} w M_k
		\] 
		is an involution:
		\[
		(\tau x_n^{2d} ww')^2
		= x_n^{2d} ww' x_n^{2d} ww'
		= x_n^{2d}(x_n^{2d})^{ww'}
		= 1.
		\]
		If, in addition, $\tau(1) \ne 1$ 
		then $x_{\tau(1)}^{-1}x_n \in M_k$ so that
		\[
		\tau x_1x_n^{2d+1} ww' x_{\tau(1)}^{-1}x_n 
		\in \tau  x_1x_n^{2d+1} w M_k
		\]
		is an involution:
		\[
		\begin{aligned}
		(\tau x_1x_n^{2d+1} ww' x_{\tau(1)}^{-1}x_n)^2
		&= x_{\tau(1)}x_n^{2d+1} ww' x_1^{-1}x_nx_1x_n^{2d+1} ww' x_{\tau(1)}^{-1}x_n \\
		&= x_{\tau(1)}x_n^{2d+1} (x_n^{2d+2})^{ww'} x_{\tau(1)}^{-1}x_n
		= 1.
		\end{aligned}
		\]
		
		Finally, assume that $\tau(n)=n$ and $\tau(1)=1$. 
		Recall that $w' \in V \cap M_k = V_L \langle v \rangle$
		was chosen so that $ww'$ commutes with $\tau$ and $x_n^{ww'} = x_n^{-1}$. In our case, we can redefine $w'(1) \in \{1,-1\}$, if necessary, so that these properties are preserved but also $x_1^{ww'}=x_1^{-1}$. Then
		\[
		\tau x_1x_n^{2d+1} ww' 
		\in \tau  x_1x_n^{2d+1} w M_k
		\]
		is an involution:
		\[
		(\tau x_1x_n^{2d+1} ww')^2
		= x_1x_n^{2d+1} (x_1x_n^{2d+1})^{ww'} =1,
		\] 
		and this completes the proof.

	\end{proof}
	
	\begin{remark}
		One can apply the window notation to get an explicit description of a 
		complete list of involutive $M_k$-coset representatives in $\tB_n$. It is a disjoint union 
		\[
		R := \bigsqcup_{\tau\in T} R_{k,\tau}.
		\]
		
		For every $\tau = (i_1,j_1) \cdots (i_t,j_t) \in T$, let 
		$R_{k,\tau}$ be the set of affine permutations $\sigma=[\sigma(1),\ldots,\sigma(n)]$, in window notation, satisfying:
		\begin{itemize}
			\item
			$\sigma(i) = i$ for all $i\in [k]\setminus \{1,i_1,\dots,i_t\}$;
			\item
			$\sigma(i) = \pm i$ for all  
			$k < i\not\in \{j_1,\dots,j_t,n\}$;
			\item 
			$\sigma(i_r)=\pm j_r$ and $\sigma(j_r)=\pm i_r$ for all $1\le r\le t$,
			with ${\rm sign}(\sigma(i_r))={\rm sign}(\sigma(j_r))$;
			\item
			if $\tau(n) \ne n$, namely $j_t=n$, then 
			$\sigma(n)=i_t^{*d}$ and $\sigma(i_t)=n^{*-d}$ for some (even or odd) $d \in \ZZ$
			and if in addition $\tau(1)=1$ then $\sigma(1)=1$.
			\item
			if $\tau(n)=n$, namely
			$j_t\ne n$, then there are two families of associated representatives.
			The first is defined by 
			$\sigma(n)=(-n)^{*d}$
			for some even $d\in \ZZ$, and if in addition $\tau(1)=1$ then $\sigma(1)=1$.
			
			
			
			The second family splits into two subcases: 
			If $\tau(n)=n$  and $\tau(1)=j\ne 1$ then 
			$\sigma(1)=j^{*+1}$,\ $\sigma(j)=1^{*+1}$ 
			and $\sigma(n)=(-n)^{*d}$ for some even $d\in \ZZ$.
			
			Finally, if $\tau(n)=n$  and  $\tau(1)=1$
			we let  
			$\sigma(1)=(-1)^{*+1}$ and $\sigma(n)=(-n)^{*d}$ for some odd $d\in \ZZ$. 
		\end{itemize}
	\end{remark}

	\section{Combinatorial flip actions - revisited}\label{sec:combin-revisited}
	
	In this section we apply Theorem~\ref{thm:main2_new} 
	to prove 
	Theorem~\ref{thm:main1} and its analogues (Propositions~\ref{prop:CTFT} and~\ref{prop:Cn-words}).
	
	\smallskip
	
	Recall the action of $G \cong \tC_n$ on $X = \ZZ_3^n \times \ZZ$ defined by the map $\rho$ from Definition~\ref{def:c-action_general_new}.
	For each $0 \le k \le n-1$ and a positive integer $m$ let 
	\begin{equation}\label{eq:omega_m_def}
	\Omega_{n,k,m} := \Omega_{n,k}/\langle (0,\ldots,0,m) \rangle
	= \{(a_1,\dots,a_n,b)\in \ZZ_3^n\times \ZZ_m\mid k=|\{i \mid a_i = 0\}|\},
	\end{equation}
	and let $\Omega_{n,k,m}^\pm$
	be the quotient set of $\Omega_{n,k,m}$ under the equivalence relation 
	\[
	(a_1,\ldots,a_n,b) \sim (-a_1,\ldots,-a_n,-b).
	\]
	
	
	\begin{observation}\label{obs:main}
		The map $\rho$ determines a well-defined $\tC_n$-action on $\Omega_{n,k,m}^\pm$.
	\end{observation}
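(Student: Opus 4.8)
The plan is to verify that the $G$-action $\rho$ respects both quotient operations used to build $\Omega_{n,k,m}^\pm$ from the orbit $\Omega_{n,k}$: the identification of last coordinates modulo $m$, and the antipodal identification $(a_1,\dots,a_n,b)\sim(-a_1,\dots,-a_n,-b)$. Concretely, I would show that $\rho$ commutes with the translation $t\colon(a_1,\dots,a_n,b)\mapsto(a_1,\dots,a_n,b+m)$ and with the negation map $\iota\colon(a_1,\dots,a_n,b)\mapsto(-a_1,\dots,-a_n,-b)$; each commutation lets the action descend to the corresponding quotient.

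First I would treat the reduction modulo $m$. Inspecting Definition~\ref{def:c-action_general_new}, the last coordinate $b$ is left unchanged by $\rho(s_i)$ for $0\le i<n$, and under $\rho(s_n)$ it is replaced by $b+a_n$, where $a_n$ depends only on the first $n$ coordinates. Thus the increment applied to $b$ by any generator is independent of the current value of $b$, so each $\rho(s_i)$ commutes with $t$. Consequently $\rho$ induces a well-defined $\tC_n$-action on $\Omega_{n,k,m}=\Omega_{n,k}/\langle(0,\dots,0,m)\rangle$, in which $b$ now ranges over $\ZZ_m$.

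Next I would handle the antipodal identification. Because negation fixes exactly the zero entries, $\iota$ preserves the number $k$ of vanishing $a_i$ and hence maps $\Omega_{n,k,m}$ to itself; it is visibly an involution and commutes with $t$, so it is well-defined on $\Omega_{n,k,m}$. To see that $\iota$ is $G$-equivariant I would check $\iota\circ\rho(s_i)=\rho(s_i)\circ\iota$ on generators: for $0\le i<n$ this is immediate, as these generators act by a signed permutation of the $a$-coordinates and fix $b$, operations that commute with global negation; for $i=n$ one computes directly that both sides send $(a_1,\dots,a_n,b)$ to $(-a_1,\dots,-a_{n-1},a_n,-b-a_n)$. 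With $\iota$ shown to be $G$-equivariant, the equivalence relation $\sim$ is preserved by $\rho$, so the action descends once more, this time to $\Omega_{n,k,m}^\pm$, which is the assertion.

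The entire argument is routine verification, and the only step needing any attention is the generator $s_n$, the unique one that alters the last coordinate; both the commutation with $t$ and the equivariance of $\iota$ hinge on the fact that the increment $a_n$ added to $b$ is insensitive to $b$ and is negated consistently by $\iota$. I do not expect a genuine obstacle here.
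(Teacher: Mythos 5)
Your proof is correct and is exactly the routine generator-by-generator verification that the paper leaves implicit by labelling the statement an Observation: one checks that each $\rho(s_i)$ commutes with the translation by $(0,\dots,0,m)$ and with the negation map, the only nontrivial case being $s_n$, where your computation $(a_1,\dots,a_n,b)\mapsto(-a_1,\dots,-a_{n-1},a_n,-b-a_n)$ on both sides is accurate. Nothing is missing.
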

	
	\begin{corollary}\label{cor:main}
		The $\tC_n$-action on $\Omega_{n,k,m}^\pm$,
		determined by $\rho$,
		is multiplicity-free.
	\end{corollary}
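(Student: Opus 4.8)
The plan is to deduce this directly from Theorem~\ref{thm:main2_new} (which supplies the proto-Gelfand pair $(G,K_k)$) together with Corollary~\ref{cor:free_action}. By Observation~\ref{obs:main} the action of $G\cong\tC_n$ descends to $\Omega_{n,k,m}^\pm$, and the natural surjection $\Omega_{n,k}\twoheadrightarrow\Omega_{n,k,m}^\pm$ (reduce the last coordinate modulo $m$, then pass to the $\pm$-class) is $G$-equivariant. Since $\Omega_{n,k}$ is a single $G$-orbit, so is its image; thus $\Omega_{n,k,m}^\pm$ is a transitive finite $G$-set. Writing $L$ for the stabilizer of the class $[\omega_k]$, the action is isomorphic to the action of $G$ on $G/L$, and $[G:L]<\infty$. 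It therefore suffices, by Corollary~\ref{cor:free_action}, to show that $(G,L)$ is a proto-Gelfand pair.

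The key point I would establish is the inclusion $K_k\le L$. By definition $H_k$ fixes $\omega_k$, hence fixes $[\omega_k]$. For the element $v=[-1,\ldots,-n]$, Lemma~\ref{lem:window_to_rho_new} gives $\rho(v)(\omega_k)=\left(\varepsilon_k(-1),\ldots,\varepsilon_k(-n),P_k(v)\right)$; since $\varepsilon_k$ is odd we have $\varepsilon_k(-j)=-\varepsilon_k(j)$, and
\[
P_k(v)=\frac{\sum_{j=1}^n \varepsilon_k(j)\,j-\sum_{j=1}^n \varepsilon_k(-j)(-j)}{2n+1}=0,
\]
so that $\rho(v)(\omega_k)=-\omega_k$. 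As $-\omega_k\sim\omega_k$ in $\Omega_{n,k,m}^\pm$, the element $v$ also fixes $[\omega_k]$. Since $K_k=\langle H_k,v\rangle$, this yields $K_k\le L$.

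It then remains to pass from $K_k$ to the larger subgroup $L$, for which I would record the general lemma: if $(G,K)$ is a proto-Gelfand pair and $K\le L\le G$, then $(G,L)$ is a proto-Gelfand pair. Indeed, for any surjection $\varphi:G\to\bar G$ onto a finite group one has $\varphi(K)\le\varphi(L)$, and by transitivity of induction $\CC[\bar G/\varphi(K)]=\mathrm{Ind}_{\varphi(L)}^{\bar G}\CC[\varphi(L)/\varphi(K)]$; the trivial representation of $\varphi(L)$ occurs in $\CC[\varphi(L)/\varphi(K)]$ as a direct summand, so $\CC[\bar G/\varphi(L)]$ is a direct summand of the multiplicity-free module $\CC[\bar G/\varphi(K)]$ and is therefore itself multiplicity-free. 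Applying this with $K=K_k$ and our $L$ shows $(G,L)$ is proto-Gelfand, and Corollary~\ref{cor:free_action} completes the argument. The genuinely new content is confined to the computation $\rho(v)(\omega_k)=-\omega_k$ and the resulting inclusion $K_k\le L$; the main (and only real) obstacle is to make sure the direct-summand argument in the inheritance lemma is carried out inside each finite quotient $\varphi(G)$ rather than for the infinite group directly, which is exactly what the definition of a proto-Gelfand pair permits.
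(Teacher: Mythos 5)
Your proposal is correct and takes essentially the same route as the paper: the paper's proof likewise observes that $K_k=\langle H_k,v\rangle$ is contained in the stabilizer $K_{k,m}$ of $\pm\omega_k$, invokes Theorem~\ref{thm:main2_new}, and concludes via Corollary~\ref{cor:free_action}. The only difference is that you spell out two steps the paper leaves implicit --- the verification $\rho(v)(\omega_k)=-\omega_k$ via Lemma~\ref{lem:window_to_rho_new} and the lemma that proto-Gelfand pairs pass to overgroups --- both of which you carry out correctly.
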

	
	\begin{proof}
		Let $K_{k,m}$ be the stabilizer of $\pm\omega_k \in \Omega_{n,k,m}^\pm$.
		Since $K_k = \langle H_k, v \rangle \le K_{k,m}$,  Theorem~\ref{thm:main2_new} implies that the pair $(\tC_n, K_{k,m})$ is proto-Gelfand. As $\Omega_{n,k,m}^\pm$ is finite, 
		Corollary~\ref{cor:free_action} completes the proof.
		
	\end{proof}

	\medskip
	
	In the rest of this section we take $m=n+r$, 
	where $r=2$, $4$ and $3$ in Subsections~\ref{sec:partial_arc},
	\ref{sec:combin-revisited.triangulations} and
	\ref{sec:combin-revisited.factorizations}, respectively. 
	Addition is always modulo $m$.

	\begin{figure}[h]\caption{The domains under the action of $\tC_{n}$ and their correspondence to the general action on $\Omega_{n,k}$.}\label{fig:overview}
		\begin{tabular}{l|l|l}
			Domain of action of $\tC_{n}$ &Name &$\Omega$ notation
			\\
			\hline
			General action& $\ZZ_3^n\times\ZZ_m$&$\Omega_{n,k,m}$
			\\
			Arc permutations&$\A_{n+2}$&$\Omega_{n,0,n+2}$
			\\
			Partial arc permutations&$\A_{n+2,k}$&$\Omega_{n,n-k,n+2}$
			\\
			Colored triangle-free triangulations&$CTFT(n+4)$&$\Omega_{n,0,n+4}$
			\\
			Factorizations of the Coxeter element&$LF_{n+3}$&$\Omega_{n,0,n+3}$
			\\
			Geometric caterpillars&$GC_{n+3}$&$\Omega_{n,0,n+3}$
			\\
		\end{tabular}
	\end{figure} 
	
	\subsection{
		Partial arc permutations}\label{sec:partial_arc}
	
	In this section we generalize and prove Theorem~\ref{thm:main1}.
	We begin with a generalization of the flip action  described in Subsection~\ref{sec:arc}, where 
	the $\tC_n$-action on $\ZZ_3^n\times \ZZ_m$, $m=n+2$, is interpreted as a natural action on partial arc permutations.
	
	\medskip
	
	The set of {\em partial permutations}  $\S_{n,k}$ 
	consists of all one-to-one mappings from 
	$\{J\subseteq [n]:\ |J|=k\}$ to itself.
	A partial permutation may be represented by a sequence of $n$ symbols
	$\pi=[\pi(1),\dots,\pi(n)]$, some of which are distinct elements in $[n]$ and the remaining are denoted by 
	$\circ$. For example, $\pi=[5,\circ ,2,6,\circ,\circ,3]$ is a partial permutation in $\S_{7,4}$.
	
	A transposition $\sigma_i:=(i,i+1)$ acts on $\pi$ (on the right) by switching the $i$-th and $(i+1)$-st entries.
	For example, for $\pi$ as above, $\pi\sigma_4=[5,\circ,2,\circ,6,\circ,3]$.
	
	\medskip
	
	An {\em interval} in the cyclic group $\ZZ_m$ is a subset of the form
	$\{i, i+1, \ldots, i+d\}$, where addition is modulo $m$.
	
	\begin{defn}\label{def:arcpp}
		For every $0 < k \le m-2$, define the set of {\em partial arc permutations} $\A_{m,k}$
		as the set of partial permutations 
		$\pi \in \S_{m,k+2}$  
		satisfying the following conditions:
		\begin{itemize}
			\item[(i)]
			Every suffix of $\pi$ forms an interval in $\ZZ_{m}$, where the letters $\circ$ are ignored;
			
			\item[(ii)]  
			$|\{1<i< m\mid \pi(i)\ne \circ\}|=k$;
			
			\item[(iii)] 
			$\pi(i)\ne \circ$ for $i=1,m$;
			
			\item[(iv)] 
			If $i_0:=\min\{ 1<i\mid \pi(i)\ne \circ\}$ then
			\[
			\pi(1) = \begin{cases} 
			\pi(i_0)-k-1, &\text{if } \pi(i_0)-1\in\{\pi(i_0+1),\dots,\pi(m)\};\\
			\pi(i_0)+k+1,&\text{if } \pi(i_0)+1\in\{\pi(i_0+1),\dots,\pi(m)\}.
			\end{cases}
			\]
		\end{itemize}
	\end{defn}
	
	For example, the partial permutation $[7,4,\circ,\circ,8,\circ,3,1,2]$ is an arc permutation in $\A_{8,4}$.
	
	\medskip
	
	By conditions (i)-(iv) in Definition~\ref{def:arcpp}, 
	$\A_{n+2,n}=\A_{n+2}$
	is just the set of arc permutations in $\S_{n+2}$. 
	Furthermore, 
	the $\tC_n$-action on 
	$\A_{n+2}$, described in Subsection~\ref{sec:arc}, 
	may be naturally generalized 
	to a flip action 
	on the set of partial arc permutations
	$\A_{n+2,k}$, for every $0<k\le n$. 
	
	\begin{defn}\label{def:c-action_partial_arc}
		For every $0\le i\le n$ and $0< k\le n$, define 
		\[
		\rho^A(s_i)(\pi) := \begin{cases}
		\pi \sigma_{i+1},&\hbox{\rm if }\pi \sigma_{i+1}\in \A_{n+2,k};\\
		\pi, &\hbox{\rm otherwise;}
		\end{cases} \qquad (\forall \pi\in \A_{n+2,k}).
		\]
	\end{defn}
	
	
	Recall the definition of 
	$\Omega_{n,k,m}$ from equation~\eqref{eq:omega_m_def}.
	
	\begin{proposition}\label{prop:action_partial_arc}
		For every $0< k\le n$, the map $\rho^A$ determines a well defined 
		$\tC_{n}$-action on $\A_{n+2,k}$, which is isomorphic to its action on
		$\Omega_{n,n-k,n+2}$.
	\end{proposition}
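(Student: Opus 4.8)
The plan is to produce an explicit $\tC_n$-equivariant bijection $\Phi\colon \A_{n+2,k}\to\Omega_{n,n-k,n+2}$ and then invoke transport of structure. Since the general map $\rho$ of Definition~\ref{def:c-action_general_new} is a genuine group homomorphism (Observation~\ref{obs:relations}) and descends to the finite quotient $\Omega_{n,n-k,n+2}$ (the generators alter the last coordinate additively, so reduction modulo $n+2$ is compatible), it will suffice to show that $\Phi$ intertwines $\rho^A$ with $\rho$ on each Coxeter generator. Once this is done we have $\rho^A(s_i)=\Phi^{-1}\rho(s_i)\Phi$, so the operators $\rho^A(s_i)$ inherit the quadratic and braid relations of $\tC_n$ from the $\rho(s_i)$; hence $\rho^A$ is automatically a well-defined action and $\Phi$ is an isomorphism of $\tC_n$-sets. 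In particular I never have to verify the Coxeter relations for $\rho^A$ directly.

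To define $\Phi(\pi)=(a_1,\dots,a_n,b)$, set $b:=\pi(n+2)\in\ZZ_{n+2}$, which is filled by condition (iii) of Definition~\ref{def:arcpp}. For $1\le i\le n$ put $a_i:=0$ if $\pi(i+1)=\circ$; otherwise, by condition (i), the entry $\pi(i+1)$ is one of the two endpoints of the cyclic interval formed by the filled entries in positions $i+2,\dots,n+2$, and I set $a_i:=+1$ if $\pi(i+1)$ lies just above that interval (its cyclic maximum plus one) and $a_i:=-1$ if it lies just below it. Exactly the $n-k$ interior $\circ$-positions give $a_i=0$, so $\Phi(\pi)\in\Omega_{n,n-k,n+2}$. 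The inverse reconstructs $\pi$ from $(a_1,\dots,a_n,b)$ by placing $b$ in position $n+2$ and scanning positions $n+1,n,\dots,2$: a zero coordinate produces a $\circ$, while $a_i=\pm1$ appends the appropriate new endpoint to the interval built so far. This recovers positions $2,\dots,n+2$; condition (iv) then determines $\pi(1)$ uniquely as the remaining endpoint, and one checks the resulting word satisfies (i)--(iv), so $\Phi$ is a bijection.

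It remains to check $\Phi\circ\rho^A(s_i)=\rho(s_i)\circ\Phi$ for $0\le i\le n$. For an interior generator $s_i$ ($0<i<n$), $\rho^A(s_i)$ either swaps positions $i+1,i+2$ or fixes $\pi$, while $\rho(s_i)$ swaps $a_i$ and $a_{i+1}$; the cases $a_i=a_{i+1}$ (two $\circ$'s, where the swap is the identity, or two filled entries extending the interval the same way, where the swap exits $\A_{n+2,k}$) correspond exactly to $\rho^A(s_i)$ acting trivially, and when $a_i\ne a_{i+1}$ the transposition stays in $\A_{n+2,k}$ and realizes the swap of coordinates. The delicate part, and the main obstacle, is the two boundary generators $s_0$ and $s_n$, acting by $\sigma_1$ and $\sigma_{n+1}$ on the fixed-boundary positions $1$ and $n+2$. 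Here one must track condition (iii): if the interior neighbour (position $2$, resp.\ position $n+1$) is a $\circ$, then the transposition would empty a mandatory boundary slot, so $\rho^A$ fixes $\pi$, matching $\rho(s_0)$ negating $a_1=0$, resp.\ $\rho(s_n)$ negating $a_n=0$ and leaving $b$ unchanged. When that neighbour is filled, a short cyclic-interval computation shows that the swap flips the corresponding sign $a_1$ (resp.\ $a_n$), leaves $b=\pi(n+2)$ fixed for $s_0$, and sends $b\mapsto b+a_n$ for $s_n$, exactly as prescribed by $\rho$; the bookkeeping of re-anchoring $\pi(1)$ via condition (iv) after an $s_0$-move is the only genuinely fussy point. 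Having verified equivariance on all generators, the transport-of-structure argument of the first paragraph completes the proof.
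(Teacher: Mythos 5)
Your proposal is correct and follows essentially the same route as the paper: your map $\Phi$ coincides with the paper's bijection $\phi$ (sign $\pm 1$ according to whether $\pi(i+1)$ extends the suffix interval above or below, $b=\pi(n+2)$), you verify bijectivity via an explicit inverse, and you conclude by transport of structure through the generator-wise intertwining relation $\phi\circ\rho^A(s_i)=\rho(s_i)\circ\phi$ together with Observation~\ref{obs:relations}, exactly as the paper does. Your case analysis of the equivariance check is in fact more detailed than the paper's, which leaves it as an ``observe that'' step.
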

	
	\begin{proof}
		For every $\pi\in \A_{n+2, k}$, define
		$\phi(\pi)=(a_1,\dots,a_n,b)\in \Omega_{n,n-k,n+2}$ by 
		letting 
		\[
		a_i:=
		\begin{cases} 
		0, & \pi(i+1)=\circ;\\
		1,& \pi(i+1)-1\in \{\pi(i+2),\dots,\pi(n+2)\};\\
		-1,& \pi(i+1)+1\in \{\pi(i+2),\dots,\pi(n+2)\},
		\end{cases} \qquad (\forall\ 1\le i\le n),
		\]
		and $b:=\pi(n+2)$.
		
		\smallskip
		
		For example,
		for $\pi=[8,\circ,5,1,\circ,4,2,3]\in \A_{8,4}\subset \S_{8,6}$,\ \  
		$\phi(\pi)=(0,1,-1,0,1,-1,3)\in \Omega_{6,2,8}$. 
		
		
		\medskip
		
		To prove that $\phi$ is a bijection, it suffices to show that it is invertible. Indeed, for every vector $\bar a=(a_1,\dots,a_n,b)\in \Omega_{n,n-k,n+2}$
		let 
		$i_0:=\min\{i>1\mid a_i\ne 0\}$ and
		$a_0:=-a_{i_0}$.
		Let $\phi^{-1}(\bar a)$ be the partial arc permutation $\pi\in \A_{n+2,k}$ defined by
		\[
		\pi(i):=\begin{cases} 
		b ,& i=n+2;\\
		\circ, & a_{i-1}=0;\\
		b+ \sum\limits_{i-1\le j\le n:\ a_j=1} a_j,& a_{i-1}=1;\\
		b+ \sum\limits_{i-1\le j\le n:\ a_j=-1} a_j,& a_{i-1}=-1.
		\end{cases}
		\]
		One can easily verify that this is indeed 
		the inverse map.
		
		\medskip
		
		Finally, observe that for every $0\le i\le n$ and $\pi\in \A_{n+2,k}$
		\[
		\phi(\rho^A(s_i)(\pi))= \rho(s_i)(\phi(\pi)).
		\]
		Observation~\ref{obs:relations} completes the proof.
		
	\end{proof}
	
	
	
	For $\pi\in \A_{n+2,k}$,  define
	$\pi^\iota\in  \A_{n+2,k}$ by
	\[
	\pi^{\iota}(i):=\begin{cases} 
	
	\pi(i),&\text{if }\pi(i)=\circ\text{ or }\pi(i)=n+2;\\
	n+2-\pi(i),&\text{otherwise};
	\end{cases} \qquad (1\le i\le n+2).
	\]
	The set of partial arc permutations $\A_{n+2,k}$ is
	closed under $\iota$
	and this operation commutes with the above $\tC_n$-action, namely, 
	\[
	\rho^A(s_i) (\pi^\iota) = (\rho^A(s_i)(\pi))^\iota  
	\qquad (\forall \pi\in \A_{n+2,k},\, 0 \le i \le n).
	\]
	Hence, the 
	$\tC_{n}$-action on $\A_{n+2,k}$ determines 
	a well defined $\tC_{n}$-action on the set of equivalence classes $\A_{n+2,k}/\iota$.
	
	
	\begin{theorem}\label{thm:main11}
		For every $0<k\le n$,
		the $\tC_{n}$-module determined by the above action on $\iota$-equivalence classes of the set of partial arc permutations $\A_{n+2,k}$
		is multiplicity-free.
	\end{theorem}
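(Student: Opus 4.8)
The plan is to reduce the claim to Corollary~\ref{cor:main} through the $\tC_n$-equivariant bijection $\phi\colon\A_{n+2,k}\to\Omega_{n,n-k,n+2}$ constructed in the proof of Proposition~\ref{prop:action_partial_arc}. Since that corollary already asserts multiplicity-freeness of the $\tC_n$-action on the sign-quotient $\Omega_{n,n-k,n+2}^\pm$, it suffices to verify that $\phi$ transports the involution $\iota$ on $\A_{n+2,k}$ to the sign-flip relation $(a_1,\ldots,a_n,b)\sim(-a_1,\ldots,-a_n,-b)$ that defines $\Omega_{n,n-k,n+2}^\pm$. Concretely, I would establish the single identity
\[
\phi(\pi^\iota)=(-a_1,\ldots,-a_n,-b)\qquad\text{where}\qquad \phi(\pi)=(a_1,\ldots,a_n,b),
\]
reading the last coordinate in $\ZZ_{n+2}$.

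The first step is to observe that, because $\iota$ fixes every position carrying the symbol $\circ$ or the value $n+2$ and sends every other value $\pi(i)$ to $n+2-\pi(i)$, when values are read in $\ZZ_{n+2}$ the map $\iota$ acts on all non-$\circ$ entries simply as negation $x\mapsto -x$ (note $n+2\equiv 0$). In particular $b=\pi(n+2)\mapsto\pi^\iota(n+2)=-b$, which settles the last coordinate.

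The heart of the argument is the behaviour of the coordinates $a_i$, and this is the step I expect to require the most care. Since $\iota$ preserves the set of positions occupied by $\circ$, the coordinate of $\phi(\pi^\iota)$ at index $i$ vanishes exactly when $a_i=0$; this already shows that the zero pattern, hence the orbit index $n-k$, is preserved. For a position with $a_i\ne 0$, the definition of $a_i$ records whether $\pi(i+1)-1$ or $\pi(i+1)+1$ lies in the suffix $\{\pi(i+2),\ldots,\pi(n+2)\}$ of $\ZZ_{n+2}$-values. Negating every non-$\circ$ value turns $\pi(i+1)-1$ into $-(\pi(i+1)+1)$ and replaces the suffix by its negation, so that the conditions ``lower neighbour present'' and ``upper neighbour present'' are interchanged; hence $a_i\mapsto -a_i$. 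Checking this interchange carefully, in particular that the interval structure of suffixes in $\ZZ_{n+2}$ is respected under negation, is the one genuinely computational point of the proof.

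Granting the displayed identity, $\phi$ descends to a $\tC_n$-equivariant bijection $\A_{n+2,k}/\iota\to\Omega_{n,n-k,n+2}^\pm$: equivariance of $\phi$ was proved in Proposition~\ref{prop:action_partial_arc}, the involution $\iota$ commutes with the $\tC_n$-action by the commutation property noted just before the theorem statement, and the sign-flip relation is preserved by $\rho$ by Observation~\ref{obs:main}. Corollary~\ref{cor:main}, applied with orbit parameter $n-k$ and modulus $m=n+2$, then gives that the $\tC_n$-module on $\A_{n+2,k}/\iota$ is multiplicity-free. The special case $k=n$ recovers Theorem~\ref{thm:main1}.
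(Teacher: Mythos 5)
Your proposal is correct and follows essentially the same route as the paper: the paper's proof consists precisely of the observation $\phi(\pi^\iota)=-\phi(\pi)$, the resulting $\tC_n$-module isomorphism $\A_{n+2,k}/\iota\cong\Omega^\pm_{n,n-k,n+2}$, and an appeal to Corollary~\ref{cor:main}. Your careful verification of the sign-flip identity (and your use of the correct orbit index $n-k$, where the paper's proof contains a small typo writing $\Omega_{n,k,n+2}$) simply spells out details the paper leaves implicit.
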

	
	\begin{proof}
		Observe that
		\[
		\phi(\pi^\iota)=-\phi(\pi) \qquad (\forall \pi\in \A_{n+2,k}),
		\]
		where $\phi:\A_{n+2,k}\longrightarrow \Omega_{n,k,n+2}$ 
		is the $\tC_n$-module isomorphism 
		from the proof of Proposition~\ref{prop:action_partial_arc}.
		It follows that
		\[
		\A_{n,k}/\iota\cong \Omega^\pm_{n,k,n+2}, 
		\]
		as $\tC_n$-modules. Corollary~\ref{cor:main} completes the proof.
		
	\end{proof}
	
	\begin{remark}
		Letting $k=n$ in Theorem~\ref{thm:main11} implies Theorem~\ref{thm:main1}.
	\end{remark}
	
	
	
	\bigskip
	
	\subsection{Triangulations}\label{sec:combin-revisited.triangulations} 
	
	\begin{proof}[Proof of Proposition~\ref{prop:CTFT}]
		Recall from Subsection~\ref{ss:triangles} that
		a colored triangle-free triangulation $T\in CTFT(n+4)$ is encoded by a sequence of diagonals $(d_0,\dots,d_{n})$ which ends with a short diagonal $d_{n}=(b-1,b+1)$ for some
		$0\le b<n+4$. 
		This sequence may be encoded, in turn, by a vector in $\phi(T)\in \Omega_{n,0,n+4}$
		as follows.  
		First, let 
		\[
		\phi(T)(n):= b,
		\]
		if  $d_{n}=(b-1,b+1)$. 
		
		Next notice that for every $1\le i\le n$ the vertices in $d_{i+1},d_{i+2},\dots,d_n$
		together with $b$ form an interval in $\ZZ_{n+4}$.
		Denote this interval by $[a_{i+1},c_{i+1}]$, see Figure~\ref{fig:11}.
		For every $1\le i\le n$ let
		\[
		\phi(T)(i+1):= \begin{cases} 
		1,& d_{i}= (a_{i+1},c_{i+1}+1);\\
		-1,& d_{i}= (a_{i+1}-1,c_{i+1}).
		\end{cases}
		\]

		\begin{figure}[htb]
			\begin{center}
				\begin{tikzpicture}[scale=0.9]
				\draw (-1,1.7) node {$T=$};
				\fill (2.4,3.4) circle (0.1) node[above]{\tiny 1}; \fill (3.4,2.4)
				circle (0.1) node[right][red]{\tiny 2}; \fill (3.4,1) circle (0.1)
				node[right][red]{\tiny 3}; \fill (2.4,0) circle (0.1) node[below][red]{\tiny
					4}; \fill (1,0) circle (0.1) node[below][red]{\tiny 5}; \fill (0,1)
				circle (0.1) node[left]{\tiny 6}; \fill (0,2.4) circle (0.1)
				node[left]{\tiny 7}; \fill (1,3.4) circle (0.1) node[above]{\tiny 
					8};
				
				\draw
				(1,3.4)--(2.4,3.4)--(3.4,2.4)--(3.4,1)--(2.4,0)--(1,0)--(0,1)--(0,2.4)--(1,3.4);
				\draw(3.4,2.4)--(1,0);\draw (0,1)--(2.4,3.4); \draw
				(2.4,3.4)--(1,0);\draw (3.4,2.4)--(2.4,0); \draw
				(0,2.4)--(2.4,3.4);
				
				\end{tikzpicture}
			\end{center}
			\caption{$T\in CTFT(8)$ with diagonal sequence  $\left((1,7), (1,6), (1,5), (2,5), (2,4)\right)$,
				encoded by $\phi(T)=(1,1,-1,1,3)$. The interval $[a_3,c_3]=\{2,3,4,5\}$ consists of $b=3$ and the labels of the vertices the diagonals $d_3$ and $d_4$, hence $\phi(T)(3)=-1$.} 
			\label{fig:11}
		\end{figure}
		
		Observing that, for every $0\le i\le n$ and $T\in CTFT(n+4)$
		\[
		\rho(s_i)(\phi(T))=\phi(s_i T),
		\]
		$CTFT(n+4)$ and $\Omega_{n,0,n+4}$ are isomorphic $\tC_n$-modules. 
		
		Also
		\[
		\iota(T)
		= \phi^{-1}(-\phi(T)) \qquad (\forall T\in CTFT(n+4)).
		\]
		It follows that
		\[
		CTFT(n+4)/\iota \cong \Omega^\pm_{n,0,n+4}, 
		\]
		as $\tC_n$-modules. 
		Corollary~\ref{cor:main} completes the proof.
		
	\end{proof}
	
	\subsection{Factorizations}\label{sec:combin-revisited.factorizations} 
	
	
	\begin{proof}[Proof of Proposition~\ref{prop:Cn-words}]
		Define a bijection $\phi: LF_{n+3}\rightarrow \Omega_{n,0,n+3}$ as follows.
		First, recall from~\cite[Prop. 4.5]{YK} that 
		for every factorization $w=(t_1,\dots,t_{n+2})\in LF_{n+3}$,
		$t_1 = (j,j+1)$ for some $1\le j\le n+3$, and let
		\[
		\phi(w)(n+1):= j. 
		\]
		For every $1 \le i\le n$ let
		\[
		\phi(w)(n+1-i):=\begin{cases} 
		1 ,& t_{i+1} = (j,j+1)\ \text{for some}\ 1\le j\le n+3;\\
		-1,& \text{otherwise}.
		\end{cases}
		\]
		To verify that $\phi$ is a bijection we define an inverse map. 
		First, let $t_1:=(\phi(w)(n+1), \phi(w)(n+1)+1)$.
		For every $1 \le i \le n$, define $t_{i+1}$ by induction.
		Recall that for every $1\le i\le n+2$ 
		the product  $t_{1}t_{2}\cdots t_{i}$ is equal to a cycle $(a_{i},a_{i}+1,\dots,c_{i})$, where 
		$[a_{i},c_{i}]$ forms an interval in $\ZZ_{n+3}$ of length $i+1$~\cite[Lemma 4.8]{YK}.
		It follows 
		that $t_{i+1}$ is equal  to either $(c_{i}, c_i+1)$ or to 
		$(a_{i}-1,c_{i})$. 
		Let $t_{i+1}:=(c_{i},c_{i}+1)$
		if $\phi(w)(n+1-i)=1$
		and $t_{i+1}:=(a_{i}-1,c_{i})$ if $\phi(w)(n+1-i)=-1$.
		Finally, let $t_{n+2}:=
		\gamma t_{n+1}t_{n}\cdots t_1=\gamma (t_1t_2\cdots t_{n+1})^{-1}$, where $\gamma:=(1,2,\dots,n+3)$. By~\cite[Lemma 4.8]{YK}, 
		$t_1 t_2\cdots t_{n+1}=(a_{n+1}, a_{n+1}+1,\dots, a_{n+1}+n+1)$; 
		it follows that $t_{n+2}$ is also a transposition (in fact, an adjacent transposition). 
		
		
		
		Furthermore, the map $\rho^{LF}$ from Subsection~\ref{ss:words} determines a well-defined 
		$\tC_n$-action, which is isomorphic via $\phi$ to the $\tC_n$-action on $\Omega_{n,0,n+3}$, since
		\[
		\phi(\rho^{LF}(s_i)(w)) 
		= \rho(s_{n-i})\phi(w)
		\qquad (\forall w\in LF_{n+3},\ 0\le i\le n).
		\] 
		Letting $\iota: LF_{n+3}\rightarrow LF_{n+3}$ be defined as
		\[
		w^\iota:=\phi^{-1}(-\phi(w))\qquad (\forall w\in LF_{n+3}),
		\]
		we deduce that $LF_{n+3}/\iota\cong \Omega^\pm_{n,0,n+3}$. By Corollary~\ref{cor:main}, it is multiplicity-free. 
		

	\end{proof}
	


	\section{Final remarks and open problems}\label{sec:final}
	
	\subsection{The action on $\Omega_{n,k,m}$}
	In Corollary~\ref{cor:main} we proved that the action of $\tC_n$ on each orbit-quotient $\Omega_{n,k,m}^\pm=\Omega_{n,k,m}/\sim$ is multiplicity-free. 
	Here we treat the action on $\Omega_{n,k,m}$. 
	
	
	\begin{proposition}\label{prop:m=2}
		Let $G$ be an affine Weyl group of type $\tC_{n}$ or $\tB_n$. The action of $G$ on the orbit $\Omega_{n,k,m}$ is multiplicity-free if and only if $m\leq 2$. In particular, none of 
		the original finite combinatorial actions on 
		arc permutations, geometric caterpillars and triangle-free triangulations is multiplicity-free. 
	\end{proposition}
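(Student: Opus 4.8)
The plan is to study the transitive $G$-action on $\Omega_{n,k,m}$ through its point stabilizer $H_{k,m} := \Stab_G(\omega_k)$ (so that $\Omega_{n,k,m} \cong G/H_{k,m}$) and to compare it with the double-cover data already analyzed. By Lemma~\ref{lem:window_to_rho_new} the translation $x_n$ of~\eqref{eq:translation} satisfies $\rho(x_n)(\omega_k) = \omega_k + (0,\ldots,0,1)$, so $x_n$ realizes the shift of the $\ZZ_m$-coordinate $b$; consequently $x_n^m \in H_{k,m}$, in fact $H_{k,m} = \langle H_k, x_n^m\rangle$, while $H_{k,m} \le K_{k,m} := \langle H_{k,m}, v\rangle$ with index $2$ (since $v \notin H_{k,m}$). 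The negation map $\sim$ is a $G$-central involution of $\Omega_{n,k,m}$ with quotient $\Omega_{n,k,m}^\pm$, so as $G$-modules $\chi := \CC[\Omega_{n,k,m}] = \psi \oplus \chi^-$, where $\psi := \CC[\Omega_{n,k,m}^\pm] = \mathrm{Ind}_{K_{k,m}}^G 1$ is multiplicity-free by Corollary~\ref{cor:main} and $\chi^- = \mathrm{Ind}_{K_{k,m}}^G \epsilon$ is its antisymmetric complement, $\epsilon$ being the nontrivial character of $K_{k,m}/H_{k,m}$. Since $\sim$ splits the multiplicity space of each irreducible into $\pm$-eigenspaces, $\chi$ is multiplicity-free if and only if \emph{every} irreducible of the finite quotient $\bar G$ occurs in $\chi$ at most once; equivalently, $\chi^-$ is multiplicity-free and $\langle \psi, \chi^-\rangle = 0$.

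For the direction $m \le 2$ I would prove the stronger statement that $(G, H_{k,m})$ is a proto-Gelfand pair, applying the Gelfand trick (Lemma~\ref{t:Gelfand_new}) in each finite quotient with the \emph{twisted} involutive anti-automorphism $\iota(g) := v g^{-1} v$. This $\iota$ is involutive because $v^2 = 1$, it fixes $H_{k,m}$ setwise because $v$ normalizes it (indeed $\Stab_G(-\omega_k) = \Stab_G(\omega_k)$, as $\sim$ commutes with $G$), and -- crucially -- it fixes the obstructing translation, $\iota(x_n) = v x_n^{-1} v = x_n$, since $x_n^v = x_n^{-1}$. Writing a general coset representative as $\tau x_n^d w$ with $\tau \in T$, $w \in V$ (as in the proof of Theorem~\ref{thm:main2_new}), one computes $\iota(\tau x_n^d w) = w x_n^{d} \tau$, and the remaining task is to absorb the discrepancy into $H_{k,m}$ on the two sides. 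This is exactly where $m \le 2$ enters: then $x_n^2 \in H_{k,m}$, so $x_n^{d} \equiv x_n^{-d} \pmod{H_{k,m}}$, which supplies precisely the reversibility needed to show that every double coset $H_{k,m} g H_{k,m}$ is $\iota$-stable. The Gelfand trick then yields multiplicity-freeness via Corollary~\ref{cor:free_action}.

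For the direction $m \ge 3$ I would show that $\chi$ is \emph{not} multiplicity-free by producing a single irreducible of multiplicity $\ge 2$, i.e.\ by establishing $\langle \psi, \chi^-\rangle \ge 1$. The source is the pair of $X$-weights carried by the $X$-orbit of $\omega_k$: since $\rho(x_i)(\omega_k) = \omega_k + (0,\ldots,0,\varepsilon_k(i))$, this orbit is a regular $\ZZ_m$-set and carries every character $\zeta_t$ ($t \in \ZZ_m$) of $X$, while $v$ interchanges $\zeta_t$ and $\zeta_{-t}$ (again because $x_n^v = x_n^{-1}$). For $m \ge 3$ there is a $t$ with $\zeta_t \ne \zeta_{-t}$, so $\zeta_t$ and $\zeta_{-t}$ are distinct $X$-characters exchanged by $v$ and hence fused into a common $G$-irreducible $\theta$ lying over the $B$-orbit of $\zeta_t$; Clifford theory then forces $\theta$ to contribute to \emph{both} the symmetric part $\psi$ and the antisymmetric part $\chi^-$, giving $\langle\psi,\chi^-\rangle \ge 1$ and thus multiplicity $\ge 2$ in $\chi$. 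Concretely this non-vanishing can be certified by exhibiting, in the finite quotient, a nontrivial $K_{k,m}$-double coset represented by some $g$ with $K_{k,m} \cap K_{k,m}^{g} \le H_{k,m}$, built from a suitable translation and the classes $\pm\omega_k$, $\pm(x_n\omega_k)$. This non-vanishing is the heart of the matter and also the main obstacle: for $m \le 2$ one has $\zeta_t = \zeta_{-t}$ and no such fusion occurs, matching the Gelfand-pair conclusion above, and the mechanism is the global, $G$-level incarnation of the dihedral ($\S_3$) degeneracy already isolated in Remark~\ref{rem:proto_HK}. Finally, since the combinatorial actions correspond to $m = n+2,\, n+3,\, n+4 \ge 3$, the ``in particular'' clause follows, and the identical argument applies to type $\tB_n$ using $M_k$ and Theorem~\ref{thm:main2_B} in place of $K_k$.
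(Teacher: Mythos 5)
Your decomposition $\CC[\Omega_{n,k,m}] = \psi \oplus \chi^-$ and your reduction of multiplicity-freeness to ``$\chi^-$ multiplicity-free and $\langle\psi,\chi^-\rangle=0$'' are sound, and your treatment of the direction $m\le 2$ is correct and genuinely different from the paper's. The paper applies the Gelfand trick with plain inversion, finding an involution in each coset when $\tau(n)\ne n$ and handling the remaining case $\tau=id$ separately by showing $(x_n^h w)^2\in J_k$, so that the double coset is self-paired; your twisted anti-automorphism $\iota(g)=vg^{-1}v$ (legitimate since $v$ is central in $B$ and inverts the translation lattice) treats both cases uniformly. Your ``absorption'' step is only sketched, but it does go through: after inserting the paper's device of choosing $w'\in V\cap H_k$ with $ww'$ $\tau$-invariant and $x_n^{ww'}=x_n^{-1}$, the case $\tau(n)=n$ needs exactly $x_n^{2d}\in H_{k,m}$, i.e.\ $m\le 2$, while the case $\tau(n)\le k$ uses $x_{\tau(n)}\in H_k$ and works for all $m$. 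Like the paper, you thereby prove the stronger fact that the stabilizer itself (not only its double cover) is proto-Gelfand when $m\le 2$.

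The direction $m\ge 3$, however, has a genuine gap at the point you yourself flag as ``the heart of the matter.'' The step ``$\zeta_t$ and $\zeta_{-t}$ are exchanged by $v$ and hence fused into a common $G$-irreducible $\theta$; Clifford theory then forces $\theta$ to contribute to both $\psi$ and $\chi^-$'' is not a valid deduction: over a single $\bar G$-orbit of characters of the translation image there are in general \emph{many} irreducibles (indexed by the irreducibles of the inertia quotient), and your own weight analysis only shows that each relevant weight space of $\CC[\Omega_{n,k,m}]$ is two-dimensional and splits $1+1$ between $\psi$ and $\chi^-$. Nothing in the fusion argument prevents the $\psi$-half and the $\chi^-$-half from lying in \emph{distinct} irreducibles over the same orbit, so $\langle\psi,\chi^-\rangle\ge 1$ does not follow; and the certifying double coset with $K_{k,m}\cap {}^{g}K_{k,m}\le H_{k,m}$ is asserted but never exhibited. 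The paper closes precisely this gap with a dihedral subquotient: writing $H'$ for the stabilizer $H_k\Ker(\varphi_m)$, the subgroup $M=\langle v\rangle\langle x_n\rangle H'$ normalizes $H'$ with $M/H'$ dihedral of order $2m\ge 6$, so $1_{H'}^{M}$ is the regular character of a nonabelian group, hence not multiplicity-free, and inducing to $G$ preserves non-multiplicity-freeness. Equivalently, every two-dimensional irreducible $\sigma$ of $M/H'$ restricts to $K_{k,m}/H'\cong\ZZ/2\ZZ$ as $1\oplus\epsilon$, so $\mathrm{Ind}_{K_{k,m}}^{M}1$ and $\mathrm{Ind}_{K_{k,m}}^{M}\epsilon$ share $\sigma$, which after inducing to $G$ yields exactly the inequality $\langle\psi,\chi^-\rangle\ge 1$ your sketch needs (the $\S_3$ of Remark~\ref{rem:proto_HK} is the $m=3$ instance of this mechanism). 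With that lemma supplied, your framework, the ``in particular'' clause ($m=n+2,n+3,n+4\ge 3$), and the type $\tB_n$ variant via $M_k$ all fall into place as you indicate.
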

	
	\begin{proof}
		The proof here is for the case when $G$ is of type $\tC_n$. The other case can be treated similarly, see the proof of Theorem~\ref{thm:main2_B}.
		
		We use the notations from Section~\ref{sec:Gelfand_gen}.
		
		The group $G$ acts on the finite set $\ZZ_3^n\times\ZZ_m$ via $\rho$ with orbits $\Omega_{n,k,m}$, see \eqref{eq:omega_m_def}. As in Corollary~\ref{cor:main} above, let $\varphi_m$ denote the homomorphism onto the finite action of $G$. The stabilizer of $[\omega_k] = [(0,\ldots,0,1,\ldots,1,0)] \in \Omega_{n,k,m}$ is $H_k\Ker(\varphi_m)$. The action of the parabolic subgroup $B\leq G$ is faithful on $\ZZ_3^n$ so the kernel is contained in the translation subgroup $\Ker(\varphi_m)\subseteq X$. In turn, by \eqref{eq:translation}, $\langle x_i^m\mid 1\leq i\leq n\rangle \leq \Ker(\varphi_m)$.
		
		If $m=1$ then it follows that $\Ker(\varphi_1)=X$ and the stabilizer is $L_k:=H_kX$. 
		
		If $m=2$ then the stabilizer $J_k=H_k\Ker(\varphi_2)$ is of index $2$ in $L_k$, the factor group is generated by $x_n$.

		We now prove that $J_k$ is a proto-Gelfand subgroup of $G$ which in turn implies that $L_k$ is a proto-Gelfand subgroup of $G$.

		The proof is along the lines of Theorem~\ref{thm:main2_new}.
		As $G=TVL_k=TV\langle x_n\rangle J_k$, every coset of $J_k$ in ${G}$ is of form ${\tau} {x_n}^h {w}{J_k}$ for some $\tau\in T$, $w\in V$ and $0\leq h\leq 1$. We can also assume that the expansion of $w$ in the natural basis of $V$ does not involve $e_j$ for $j\leq k-1$, as these are in $H_k\leq J_k$. Pick $w^\prime\in H_k\cap V$ such that $ww^\prime$ is $\tau$-invariant.
		
		If $\tau\ne id$ then there exists $k+1\leq i\leq n$ such that $i\ne\tau(i)\leq k$. Then $x_{\tau(i)}\in H_k$, $x_ix_n^{-1}\in J_k$ and $z=w^{-1}x_n^{-1}x_iw=(x_n^{-1})^wx_i^w\in J_k$ so $\tau x_n^h w z^h w^\prime (x_{\tau(i)}^{-h})\in \tau x_n^h w J_k$ is an involution:
		\begin{align*}(\tau  x_n^h wz^hw^\prime x_{\tau(i)}^{-h})^2=(\tau  x_i^h ww^\prime x_{\tau(i)}^{-h})^2&
		=x_{\tau(i)}^h ww^\prime x_i^{h}x_i^{-h}ww^\prime x_{\tau(i)}^{-h}=1.
		\end{align*}
		
		Finally, if $\tau=id$ then 
		$\tau  x_n^h w J_k$ might not contain an involution, but its double coset $J_kx_n^hwJ_k$ 
		contains an involution, since
		\begin{align*}(x_{n}^h w)^2=(x_{n}^w)^ hx_n^h\in\langle x_n^2\rangle\leq J_k,
		\end{align*}
		and thus
		\[
		id\in J_k\subseteq J_k (x_{n}^h w)^2 J_k \subseteq (J_k x_n^h w J_k)^2.
		\]

		\medskip
		
		If $m>2$ then the permutation character $\pi$ of $G$ on $\Omega_{n,k,m}$ is induced from the trivial character of the stabilizer $H_k\Ker(\varphi_m)$, $\pi=1_{H_k\Ker(\varphi_m)}^G$. Let $M=V_kL_k\leq N_G(H_k\Ker(\varphi_m))$ and $M/H_k\Ker(\varphi_m)$ is dihedral of order $2m\geq 6$. Then $1_{H_k\Ker(\varphi_m)}^M$ is not multiplicity-free, and hence nor is $\pi=(1_{H_k\Ker(\varphi_m)}^M)^G$. See Remark~\ref{rem:proto_HK}.
		
		As for the original combinatorial flip actions 
		on arc permutations, geometric caterpillars and triangle-free triangulations, 
		$m=n+2,n+3,n+4$ respectively, none of the actions is multiplicity-free.
		
	\end{proof}

	
	
	
	\subsection{Open problems}
	
	An early version of the current paper posed the problem of finding a unified construction of proto-Gelfand subgroups for all affine Weyl groups.
	An algebraic solution of this problem will be presented in~\cite{H}. 
	Combinatorial constructions of proto-Gelfand subgroups, 
	essentially similar to those described in this paper,
	work to some extent for the other classical affine types, $\tA$ and $\tD$. 
	However, the results and proofs for these types are more
	complicated. 

	\medskip
	
	The 
	stabilizers described in this paper are reflection subgroups, i.e., generated by reflections (see Proposition~\ref{t:H_k}). 
	
	\begin{problem}
		Which pairs $(W,H)$, where $W$ is a Coxeter group and $H\le W$ a reflection subgroup or a double cover thereof, are proto-Gelfand ?
	\end{problem}
	
	
	
	
	Geometric flip actions of $\tC_n$ on triangulations and trees served to motivate 
	the construction of proto-Gelfand subgroups of $\tC_n$.
	It is desired to find similar actions for other types.
	
	\begin{problem}
		Find natural 
		geometric flip actions of other affine Weyl groups.
	\end{problem}
	Of special interest are 
	geometric realizations of Theorem~\ref{thm:main2_B}.
	

\end{document}